\documentclass{amsart}

\usepackage{epsfig}
\usepackage{amsmath}
\usepackage{amssymb}
\usepackage{amscd}
\usepackage{graphicx}
\usepackage{color}
\usepackage{verbatim}

\newtheorem{thm}{Theorem}[section]
\newtheorem{lem}[thm]{Lemma}
\newtheorem{prop}[thm]{Proposition}
\newtheorem{fact}[thm]{Fact}

\newtheorem{que}[thm]{Question}

\newtheorem{defn}[thm]{Definition}

\theoremstyle{remark}
\newtheorem{rem}[thm]{Remark}

\newtheorem{exam}[thm]{Example}

\def \N {\mathbb N}

\def \K {\mathcal K}
\def \Z {\mathbb Z}

\def \E {\mathbb E}

\def \corr {\mathsf{corr}}
\def \sq {sequence}
\def \xt {$(X,T)$}

\def \xmt {$(X,\mathcal A,\mu,T)$}
\def \xmtp {$(X',\mathcal A',\mu',T')$}
\def \yns {$(Y,\mathcal B,\nu,S)$}
\def \tl {topological}
\def \im {invariant measure}

\def \ds {dynamical system}

\def \mmu {\boldsymbol{\mu}}

\numberwithin{equation}{section}

\begin{document}

\title[Correlation of sequences and of measures]{Correlation of sequences and of measures, generic points for joinings and ergodicity of certain cocycles}

\author{Jean-Pierre Conze, Tomasz Downarowicz and Jacek Serafin}

\address{IRMAR, CNRS UMR 6625, University of Rennes I, Campus de Beaulieu, 35042 Rennes Cedex, France}

\address{Institute of Mathematics, Polish Academy of Science, \'Sniadeckich 8, 00-656 Warsaw, Poland}

\address{Institute of Mathematics and Computer Science, Wroc{\l}aw University of Technology, Wroc{\l}aw 50-370, Poland}

\begin{abstract}
The main subject of the paper, motivated by a question raised by Boshernitzan, is to give criteria for a bounded complex-valued \sq\ to be uncorrelated to any strictly ergodic \sq. As a tool developed to study this problem we introduce the notion of correlation between two shift-\im s supported by the symbolic space with complex symbols. We also prove a ``lifting lemma'' for generic points: given a joining $\xi$ of two shift-\im s $\mu$ and $\nu$, every point $x$ generic for $\mu$ lifts to a pair $(x,y)$ generic for $\xi$ (such $y$ exists in the full symbolic space). This lemma allows us to translate correlation between bounded \sq s to the language of correlation of measures. Finally, to establish that the property of an \im\ being uncorrelated to any ergodic measure is essentially weaker than the property of being disjoint from any ergodic measure, we develop and apply criteria for ergodicity of four-jump cocycles over irrational rotations. We believe that apart from the applications to studying the notion of correlation, the two developed tools: the lifting lemma and the criteria for ergodicity of four-jump cocycles, are of independent interest. This is why we announce them also in the title. In the Appendix we also 
introduce the notion of conditional disjointness.
\end{abstract}

\maketitle
\thanks{The research of the second and third author is supported by the NCN (National Science Center, Poland) grant 2013/08/A/ST1/00275.}

\section{Basic notions and motivation}

\begin{defn}\label{defn2}
For \emph{finite} \sq s (blocks) $A=[a_1,a_2,\dots, a_n],\ B=[b_1,b_2,\dots, b_n]$ (of the same length $n$), consisting of complex numbers, we define
$$
\corr(A,B)=\Bigl(\frac1n\sum_{i=1}^na_i\overline{b_i}\Bigr)-\Bigl(\frac1n\sum_{i=1}^na_i\Bigr)\cdot\Bigl(\frac1n\sum_{i=1}^n\overline{b_i}\Bigr).
$$
Two complex-valued bounded \emph{infinite} \sq s $x, y$ are declared \emph{uncorrelated}, \emph{weakly correlated}, or \emph{strongly correlated}, if
\begin{align*}
\lim_n\corr(x_{[1,n]},y_{[1,n]})&=0, \\
\limsup_n|\corr(x_{[1,n]},y_{[1,n]})|&>0, \\
\liminf_n|\corr(x_{[1,n]},y_{[1,n]})|&>0,
\end{align*}
respectively. (We are using the following notation: $x_{[1,n]}$ is the block $[x_1,x_2,\dots,x_n]$.)
\end{defn}

Notice that if at least one sequence has zero mean, then the correlation uses only the first average
(of the products).

\medskip
Although most of our results apply to sequences in the complex space $\ell^\infty$, for simplicity of the forthcoming arguments we will restrict our attention to the space $K^\N$ of sequences taking values in a finite subset $K$ of the unit disc. This will allow us to use a range of measure-theoretic and \tl\ tools applicable for subshifts. We will call $K$ the \emph{alphabet}, and $\N$ denotes the set of positive integers (so that the enumeration of our \sq s always starts with the index 1).
\medskip

The famous \emph{Sarnak Conjecture} (see \cite{Sar}) asserts that the \emph{M\"obius function $\mmu$} is uncorrelated to any \emph{deterministic \sq\ $x$} where $\mmu$ is the ``signed characteristic function'' of square-free numbers:
$$
\mmu(n)=\begin{cases}
\phantom{-}1&\text{if $n=1$,}\\
\phantom{-}0& \text{if $n$ has a repeated prime factor,}\\
\phantom{-}(-1)^r&\text{if $n$ is a product of $r$ distinct primes},
\end{cases}
$$
and a deterministic \sq\ is any \sq\ $x$ of the form
$$
x_n = f(T^na),
$$
where $f$ is a continuous complex-valued function defined on a \tl\ \ds\ \xt\ (i.e., $T:X\to X$ is a
continuous transformation of a compact metric space $X$) of \tl\ entropy zero and $a\in X$ (equivalently, the shift orbit-closure of $x$ has \tl\ entropy zero). Observe that the set of square-free numbers has positive density in $\N$, so the conjecture is not trivial. An up-to-date exposition of classes of systems for which the conjecture holds, as well as some new results, can be found in \cite{Lem}. 
While Sarnak Conjecture is still far from being solved, our research is motivated by a similar problem (attributed to Boshernitzan), stated below. A \sq\ $x$ is \emph{strictly ergodic} if it is of the form
$$
x_n = f(T^na),
$$ 
where $f$ is a continuous complex-valued function defined on a strictly ergodic \tl\ \ds\ \xt\ and $a\in X$
(equivalently, the shift orbit-closure of $x$ is a strictly ergodic dynamical system).

\begin{que}\label{que1}
Is the M\"obius function uncorrelated to any strictly ergodic \sq ?
\end{que}

We have tried to understand what kind of question this is and what it means for a \sq\ to be uncorrelated to any strictly ergodic \sq. As it turns out, the condition defines an interesting, nontrivial class of \sq s.
\medskip

Let us now recall the notion of a generic point. By \emph{\im s} we will always mean $T$-invariant Borel probability measures on $X$.

\begin{defn}\label{generating}
A point $x$ in a \tl\ \ds\ \xt\ is \emph{generic} for an \im\ $\mu$ if 
$$
\lim_n\frac1n\sum_{i=1}^n f(T^i x) = \int f\,d\mu,
$$
for every continuous (real or complex) function $f$ on $X$.
\end{defn}

We recall the well-known fact that generic points always exist for ergodic measures, in which case they 
form a set of full measure. Moreover, if a \tl\ \ds\ is uniquely ergodic then all of its points are generic for the unique \im. It is also well known that in the full shift over a finite alphabet every shift-\im\ has a generic point. An important simplification which arises for symbolic systems is that for genericity of a point it suffices to verify the convergence for the countable family of characteristic functions of cylinder sets. Moreover, each point $x$ is now a \sq\ and the ergodic average of length $n$ for the characteristic function of a cylinder associated to a block $B$ corresponds to the \emph{frequency} with which $B$ appears in the initial block $x_{[1,n+|B|-1]}$. Thus $x$ is generic for a shift-\im\ $\mu$ if 
and only if the following holds, for every positive integer $m$ and every block $B\in K^m$:
\begin{equation}
\mu(B) = \lim_n \tfrac1n\#\{i\in\{1,2,\dots,n\}: x_{[i,i+m-1]}=B\}
\end{equation}
(by writing $\mu(B)$ we identify blocks with their associated cylinder sets in $K^\N$). 
We will need a similar notion of semi-generating a measure along a sub\sq:

\begin{defn}\label{semigen}
We say that a \sq\ $x\in K^\N$ \emph{semi-generates} a shift-\im\ $\mu$ \emph{along a sub\sq} $(n_k)$ if the following holds, for every positive integer $m$ and every block $B\in K^m$:
\begin{equation}\label{gen}
\mu(B) = \lim_k \tfrac1{n_k}\#\{i\in\{1,2,\dots,n_k\}: x_{[i,i+m-1]}=B\}.
\end{equation}
\end{defn}

By the weak-star compactness of the space of probability measures, given a point $x\in K^\N$, every sub\sq\ $(n_k)$ contains a sub-sub\sq\ along which $x$ semi-generates an \im. Now, given two points, $x$ and $y$, we can treat the pair $(x,y)$ as an element of $(K\times K)^\N$ (which is also a symbolic space), hence every sub\sq\ $(n_k)$ contains a sub-sub\sq\ $(n_{k_l})$ along which the pair $(x,y)$ semi-generates an \im\ $\xi$ on $(K\times K)^\N$. It is elementary to see, that along every such sub-sub\sq\ $(n_{k_l})$ all the limits involved in the definition of correlations, i.e., 
$$
\lim_l \frac1{n_{k_l}}\sum_{i=1}^{n_{k_l}}x_i\overline{y_i}, \ \ \ \ \lim_l \frac1{n_{k_l}}\sum_{i=1}^{n_{k_l}}x_i, \ \ \ \ \lim_l \frac1{n_{k_l}}\sum_{i=1}^{n_{k_l}}\overline{y_i},
$$
exist and equal the respective integrals
$$
\int x_1\overline{y_1}\,d\xi, \ \ \ \int x_1\,d\mu, \ \ \ \int \overline{y_1}\,d\nu,  
$$
where $\mu$ and $\nu$ are the marginal measures of $\xi$ (i.e., $\xi$ is a \emph{joining} of $\mu$ and $\nu$).

Of course, the most convenient situation occurs when the points $x$ and $y$ are generic for some 
\im s. In this case the entire sequences $\frac1n\sum_{i=1}^n x_i$ and $\frac1n\sum_{i=1}^n\overline{y_i}$ (but not $\frac1n\sum_{i=1}^n x_i\overline{y_i}$) converge, which simplifies many arguments. Because we are interested in studying correlation with strictly ergodic \sq s (and every element of a strictly ergodic system is generic), we can immediately assume that our \sq\ $y$ is generic. It is thus reasonable to first look at \sq s $x$ which are also generic (we will extend our results to general elements $x$ in the last but one section). In this manner we are led to the following question:

\begin{que}\label{que2}
When is a \sq\ $x\in K^\N$, generic for a shift-\im, weakly (strongly) correlated to a strictly ergodic \sq?
\end{que}

\medskip
The first (but already rich in consequences) reduction of the problem relies on an observation made long ago by B. Weiss (\cite{weiss}).

\begin{thm}\label{thm1} If $x\in K^\N$ is generic for an \emph{ergodic} measure then it is a ${\bar d}$-limit of a sequence of strictly ergodic points, where $\bar d$ is the Besicovitch distance
$$
\bar d(x,y) = \limsup_n   \frac1n\sum_{i=1}^{n}|x_i - y_i|.
$$
\end{thm}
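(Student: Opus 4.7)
The plan is to fix $\epsilon>0$ and construct a single strictly ergodic point $y\in K^\N$ with $\bar d(x,y)<\epsilon$; applying this for $\epsilon=1/k$ will yield the required $\bar d$-convergent sequence.

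First I would choose $M\in\N$ large enough that the $\bar d$-distance between two shift-invariant measures on $K^\N$ is controlled, up to $\epsilon/4$, by the total variation distance of their distributions on $M$-blocks. Since $x$ is generic for the ergodic measure $\mu$, the Birkhoff ergodic theorem guarantees that for some $N\gg M$ the empirical frequency of every block $B\in K^M$ in the initial segment $W:=x_{[1,N]}$ will be within $\epsilon/4$ of $\mu(B)$. I would fix such an $N$ and treat $W$ as a ``master block''.

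Next, using a Jewett--Krieger style cutting-and-stacking (or substitution) construction, I would build a strictly ergodic subshift $Y\subseteq K^\N$ whose generic orbits consist essentially of concatenations of copies of $W$, interspersed with short marker blocks of total density less than $\epsilon/4$. The markers are needed to enforce minimality and unique ergodicity of $Y$; their sparsity guarantees that the unique invariant measure $\nu$ on $Y$ has $M$-block statistics within $\epsilon/2$ of those of $\mu$. To produce $y$ itself, I would start with $W$ and then continue along the prescribed substitution pattern, obtaining a point that is generic for $\nu$, coincides with $x$ on the first $N$ coordinates, and in the long run disagrees with a suitably aligned copy of $x$ on a set of density below $\epsilon$; this gives $\bar d(x,y)<\epsilon$.

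The hardest part will be the simultaneous control of unique ergodicity and of $\bar d$-proximity: the combinatorial structure (markers, hierarchical block arrangement) needed to force minimality and unique ergodicity tends to spoil the $\bar d$-approximation unless chosen with great care, while any attempt to keep $y$ literally close to $x$ threatens to ruin minimality. Navigating this trade-off, by inductively refining block levels so that all sufficiently long block frequencies are forced to be uniform while the density of modifications remains summable, is precisely what the Jewett--Krieger cutting-and-stacking machinery is designed to do, and adapting it to the symbolic setting with prescribed statistics inherited from $x_{[1,N]}$ is the technical core of the argument.
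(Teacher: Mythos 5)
The paper does not actually prove this statement; it is quoted from B.~Weiss's \emph{Single Orbit Dynamics} \cite{weiss}, so your attempt has to stand on its own, and it has a genuine gap at its core. The distance $\bar d$ in the statement is the Besicovitch (coordinatewise, Hamming-density) distance between the two \emph{sequences}, not a distance between their empirical measures. Your opening step, choosing $M$ so that ``$\bar d$ is controlled, up to $\epsilon/4$, by the total variation distance of the $M$-block distributions,'' is false in the direction you need: two sequences can have identical block statistics of every order and still be at Besicovitch distance bounded away from zero. Concretely, if $x$ is generic for the Bernoulli $(1/2,1/2)$ measure on $\{0,1\}^\N$ and $W=x_{[1,N]}$, then $x_{[N+1,2N]}, x_{[2N+1,3N]},\dots$ have nearly the same $M$-block statistics as $W$ but agree with $W$ on only about half of their coordinates; hence the point $y$ you build by concatenating copies of $W$ (with sparse markers) satisfies $\bar d(x,y)\approx 1/2$ no matter how large $N$ is. No alignment or choice of markers repairs this, because $y$ depends only on the single block $W$ while $x$ keeps producing new, statistically equivalent but combinatorially unrelated blocks. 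You correctly flag the tension between unique ergodicity and $\bar d$-proximity as the technical core, but the missing idea is precisely the one that resolves it.

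That idea (Weiss's) is to regularize $x$ itself along its whole length rather than to recycle one master block. At stage $k$ one chooses a finite family $\mathcal C_k$ of $L_k$-blocks whose normalized-Hamming $\delta_k$-neighbourhood has $\mu$-measure greater than $1-\delta_k$ (possible for any finite family of blocks carrying most of the measure), cuts the previous approximant into consecutive $L_k$-blocks by a nested marker/tower structure, and replaces each block by its Hamming-nearest element of $\mathcal C_k$; genericity of $x$ for the \emph{ergodic} measure $\mu$ guarantees that all but a $\delta_k$-density of these blocks are moved by at most $\delta_k$, so the stage costs $O(\delta_k)$ in $\bar d$, while the output uses only finitely many $L_k$-blocks occurring with controlled frequencies. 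Iterating with $\sum_k\delta_k<\epsilon$ forces minimality and unique ergodicity of the orbit closure of the limit point $y$ and keeps $\bar d(x,y)<\epsilon$. Without this ``finite spanning set plus nearest-neighbour recoding of $x$'' mechanism, the cutting-and-stacking machinery you invoke does not by itself produce a strictly ergodic point that is Besicovitch-close to the given $x$.
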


It is easy to see that if $y$ is a \sq\ strongly or weakly correlated to $x$, then the same holds for 
$y$ and any $x'$ sufficiently close to $x$ in $\bar d$.  This remark, together with the fact that every strictly ergodic point is generic for an ergodic measure immediately imply that 
\begin{thm}\label{thm2}
A \sq\ $x\in K^\N$ is weakly (strongly) correlated to a strictly ergodic point if and only if it is weakly (strongly) correlated to a point generic for an ergodic measure.
\end{thm}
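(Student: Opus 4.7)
The plan is to verify the two directions of the biconditional separately. The forward direction is immediate: every strictly ergodic system is uniquely ergodic, its unique \im\ is automatically ergodic, and every point of the system is generic for that measure. Hence any strictly ergodic point is in particular a generic point for an ergodic measure, so any witness of weak (resp.\ strong) correlation realised by a strictly ergodic $y$ is automatically a witness realised by a point generic for an ergodic measure.

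For the converse, suppose $x$ is weakly (resp.\ strongly) correlated to some $y\in K^\N$ that is generic for an ergodic measure. Theorem~\ref{thm1} applied to $y$ yields strictly ergodic points $y_k\in K^\N$ with $\bar d(y_k,y)\to 0$. The task is to show that $x$ is still correlated to $y_k$ for some (in fact, all sufficiently large) $k$, which reduces the converse to a $\bar d$-continuity property of the correlation in its second argument. This is exactly the remark the authors already flagged, applied in the symmetric role (perturbing the test \sq\ $y$ rather than the reference \sq\ $x$).

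Since $K$ lies in the unit disc, expanding the definition of $\corr$ and using both $|x_i|\le 1$ and $|\tfrac1n\sum_{i=1}^n x_i|\le 1$, one obtains, for any $y'\in K^\N$,
\begin{equation*}
|\corr(x_{[1,n]}, y_{[1,n]}) - \corr(x_{[1,n]}, y'_{[1,n]})| \le \tfrac{2}{n}\sum_{i=1}^n|y_i-y'_i|.
\end{equation*}
Given $\varepsilon>0$, if $\bar d(y',y)<\varepsilon$ then the right-hand side is below $2\varepsilon$ for all sufficiently large $n$, and therefore
\begin{equation*}
\limsup_n|\corr(x_{[1,n]}, y'_{[1,n]})| \ge \limsup_n|\corr(x_{[1,n]}, y_{[1,n]})| - 2\varepsilon,
\end{equation*}
with the analogous bound for $\liminf$. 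Choosing $\varepsilon$ strictly smaller than the positive value on the right-hand side (which is positive precisely by the weak, respectively strong, correlation hypothesis on $x$ and $y$), and then choosing $k$ with $\bar d(y_k,y)<\varepsilon$, we deduce that $x$ is weakly (resp.\ strongly) correlated to the strictly ergodic point $y_k$.

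No step presents a serious obstacle: all the substantive content is packaged in Theorem~\ref{thm1} (Weiss's Besicovitch approximation of ergodic-generic points by strictly ergodic ones), and the remainder is just the routine $\bar d$-Lipschitz character of the correlation functional, which immediately transfers weak/strong correlation along $\bar d$-small perturbations.
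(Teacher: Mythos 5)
Your proof is correct and takes essentially the same route as the paper: the forward direction from the observation that every strictly ergodic point is generic for an ergodic measure, and the converse by combining Theorem~\ref{thm1} with the $\bar d$-continuity of the correlation functional, which the paper only asserts as an ``easy to see'' remark and you verify explicitly with the Lipschitz estimate.
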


This theorem allows us to formulate question \ref{que2} in a simplified, yet equivalent, version:
\begin{que}\label{que3}
When is a \sq\ $x\in K^\N$, generic for an \im\ $\mu$, strongly (weakly) correlated to a \sq\ generic for an ergodic measure?
\end{que}

Notice that if $\mu$ is ergodic then every point generic for $\mu$ is correlated to a point generic for an ergodic measure (namely to itself). An exception occurs when $\mu$ is a pointmass concentrated at a fixpoint. Such measure is ergodic, yet any of its generic points is uncorrelated to any sequence (even to itself). In either case, our question trivializes if $\mu$ is ergodic. As we shall see, in the nonergodic case, the answer depends exclusively on the properties of the \im, not on the choice of the generic point $x$. More precisely, the answer depends on the \emph{ergodic decomposition} of $\mu$, leading to a discovery of some new features of nonergodic \im s.
\medskip

We conclude this section with a remark concerning the M\"obius function $\mmu$. We can now connect Boshernitzan's question with another celebrated conjecture, the Chowla Conjecture. Leaving aside its precise formulation (see \cite{Cho}), its validity would imply that $\mmu$ were generic for a specific ergodic measure. In view of Theorem \ref{thm2}, this would also imply the negative answer to Question~\ref{que1}: the M\"obius function (as being generic for an ergodic measure not concentrated at a fixpoint) would be strongly correlated to some strictly ergodic \sq.

\section{Correlation of measures}\label{sectwo}
Following the discussion of the preceding section, we introduce the correlation of measures.
\begin{defn}\label{defn3}
The \emph{correlation} between two shift-\im s $\mu$ and $\nu$ on $K^\N$ is the number
$$
\corr(\mu,\nu)=\sup_{\xi}\left|\int x_1\overline{y_1} \,d\xi - \int x_1\,d\mu \cdot \int \overline{y_1}\,d\nu\right|,
$$
where $(x,y)\mapsto x_1$ and $(x,y)\mapsto y_1$ are the ``first symbol value'' functions on the Cartesian square of the shift space $K^\N$, and $\xi$ ranges over all joinings of $\mu$ with $\nu$.
The measures are \emph{uncorrelated} if $\corr(\mu,\nu)=0$. Otherwise we say that the measures are \emph{correlated} and any joining for which the above difference is nonzero will be referred to as a
\emph{correlating joining}.
\end{defn}

We can now formulate a question concerning \im s, completely analogous to Question \ref{que2} posed for \sq s generic for \im s. We will say that a measure is \emph{strictly ergodic} if its \tl\ support (viewed as a subshift) is strictly ergodic.

\begin{que}\label{que4}
When is a shift-\im\ $\mu$ supported by $K^\N$, uncorrelated to any strictly ergodic measure?
\end{que}

In order to simplify this question we first prove an analog of Theorem~\ref{thm2}, which
allows to drop the adjective ``strictly'' from the formulation.

\begin{thm}\label{thm2a} 
A shift-\im\ $\mu$ on $K^\N$ is correlated to a strictly ergodic measure if and only if it is correlated to an ergodic measure.
\end{thm}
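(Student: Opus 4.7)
The plan is to handle each direction of the equivalence separately.

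The implication ``correlated to a strictly ergodic measure implies correlated to an ergodic measure'' is immediate: a strictly ergodic measure, being the unique shift-\im\ on its (uniquely ergodic) \tl\ support, is automatically ergodic.

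For the converse, suppose $\mu$ is correlated to an ergodic measure $\nu$ via a joining $\xi$ with
$$\epsilon := \left| \int x_1\overline{y_1}\,d\xi - \int x_1\,d\mu \cdot \int \overline{y_1}\,d\nu \right| > 0.$$
The strategy is to use Weiss' approximation (Theorem~\ref{thm1}) to perturb $\nu$ into a strictly ergodic measure while keeping the correlation with $\mu$ bounded away from zero. First, pick a generic point $(x,y)$ for $\xi$ on the product shift $(K\times K)^\N$; such a point exists because $\xi$ is a shift-\im\ on a full symbolic space. Then $x$ is generic for $\mu$ and $y$ for $\nu$. Since $\nu$ is ergodic, Theorem~\ref{thm1} provides strictly ergodic points $y^{(k)}\in K^\N$ with $\bar d(y^{(k)},y)\to 0$; let $\nu_k$ denote the unique shift-\im\ on the orbit closure of $y^{(k)}$, so $\nu_k$ is strictly ergodic. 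By weak-$*$ compactness, for each $k$ we extract a subsequence $(n_j^{(k)})$ along which $(x,y^{(k)})$ semi-generates a shift-\im\ $\xi_k$. Because $x$ is generic for $\mu$ and $y^{(k)}$ for $\nu_k$, the measure $\xi_k$ is a joining of $\mu$ with $\nu_k$.

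The main quantitative step is a $\bar d$-continuity estimate. Let $M=\max_{a\in K}|a|$. For every $n$,
$$\left| \tfrac1n \sum_{i=1}^n x_i\overline{y_i^{(k)}} - \tfrac1n \sum_{i=1}^n x_i\overline{y_i}\right| \le M\cdot\tfrac1n\sum_{i=1}^n |y_i^{(k)}-y_i|,$$
and analogously for the single-coordinate averages $\tfrac1n\sum\overline{y_i^{(k)}}$ and $\tfrac1n\sum\overline{y_i}$. Passing to the limit along $(n_j^{(k)})$, and using genericity of $(x,y)$ for $\xi$ (which forces convergence of the $\xi$-averages along every subsequence), these bounds give
$$\left| \int x_1 \overline{y_1}\,d\xi_k - \int x_1\overline{y_1}\,d\xi \right| \le M\,\bar d(y^{(k)},y), \qquad \left| \int \overline{y_1}\,d\nu_k - \int \overline{y_1}\,d\nu \right| \le \bar d(y^{(k)},y),$$
while $\int x_1\,d\mu$ is unchanged. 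Combining these, the correlating expression for the joining $\xi_k$ of $\mu$ and $\nu_k$ differs from that for $\xi$ by at most $2M\,\bar d(y^{(k)},y)\to 0$, so for all sufficiently large $k$ its absolute value exceeds $\epsilon/2$. Hence $\mu$ is correlated to the strictly ergodic measure $\nu_k$.

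The only mildly technical point is coordinating the subsequences $(n_j^{(k)})$ with the $\bar d$-approximation; but this dissolves because genericity of $(x,y)$ guarantees that the $\xi$-averages converge along \emph{every} subsequence, so no diagonal argument is needed. The rest is routine $\bar d$-continuity of Cesàro products, exactly as in the argument surrounding Theorem~\ref{thm2}.
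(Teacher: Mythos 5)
Your proposal is correct and follows essentially the same route as the paper: take a generic point $(x,y)$ for the correlating joining $\xi$, use Weiss' theorem to replace $y$ by a $\bar d$-close strictly ergodic point, semi-generate a joining of $\mu$ with the resulting strictly ergodic measure along a subsequence, and observe that the correlating expression is stable under small $\bar d$-perturbations of $y$. You merely make quantitative (the $2M\,\bar d$ estimate) what the paper handles by its earlier remark that correlation persists under sufficiently small $\bar d$-perturbations.
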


\begin{proof} Since every strictly ergodic measure is ergodic, one implication is trivial.
Suppose $\mu$ is correlated to an ergodic measure $\nu$. Let $\xi$ be a correlating joining of $\mu$ and $\nu$. Let $(x,y)$ be generic for $\xi$ (such a pair exists in the full shift over $K\times K$). Then $x$ is generic for $\mu$, $y$ is generic for $\nu$, and these points are strongly correlated.
Indeed all the limits of averages in the definition of strong correlation of the \sq s $x$ and $y$ are equal to the respective integrals in the definition of correlation of the measures $\mu$ and $\nu$ with help of the joining $\xi$. Now, by Theorem \ref{thm1}, $y$ can be replaced by a strictly ergodic element $y'$ (which is generic for a strictly ergodic measure $\nu'$), so that $x$ and $y'$ are strongly correlated. Along some sub\sq\ the pair $(x,y')$ semi-generates an \im\ $\xi'$ on $(K\times K)^\N$, and since $x$ and $y'$ are generic for $\mu$ and $\nu'$, the marginals of $\xi'$ are $\mu$ and $\nu'$, i.e., $\xi'$ is a joining of $\mu$ and $\nu'$. Now we need to reverse the preceding argument: all the integrals in the definition of correlation of $\mu$ and $\nu'$ with help of the joining $\xi'$ are equal to the respective limits (along a subsequence) in the definition of correlation between $x$ and $y'$. Since these points are generic and strongly correlated, the limits indicate correlation regardless of the subsequence. So, $\xi'$ is a correlating joining of $\mu$ and $\nu'$.
\end{proof}

Question \ref{que4} takes now a simplified, equivalent form:

\begin{que}\label{que7}
When is a shift-\im\ $\mu$ supported by $K^\N$, uncorrelated to any ergodic measure?
\end{que}

Uncorrelation is a ``weak form'' of \emph{disjointness} (in the sense of Furstenberg), which is the condition that all expressions $\int f(x)g(y) \,d\xi - \int f(x)\,d\mu\cdot\int g(y)\,d\nu$ equal zero, when evaluated for \emph{all} bounded measurable functions $f$ and $g$ of one variable. If $\mu$ and $\nu$ are disjoint then they are obviously uncorrelated. This raises two further natural questions:
\begin{que}\label{que5}
Are there \im s disjoint from all ergodic measures?
\end{que}
\begin{que}\label{que6}
Does the reversed implication hold: does uncorrelation to any strictly ergodic measure imply disjointness from all (strictly) ergodic measures?
\end{que}

Question \ref{que5} is answered positively by a relatively simple example (provided in the last section). 

As far as Question \ref{que6} is concerned, it is rather hard to expect that uncorrelation for just one specific function (the ``first symbol value'') should imply disjointness. On the other hand, in the case of symbolic systems, this particular function corresponds in fact to a generating partition, and in many proofs in ergodic theory it suffices to consider a generating partition. It turns out that the answer to this question is negative (which makes uncorrelation to any ergodic measure a new property). However, the appropriate example, that we provide in the last section, is far from trivial; in order to verify the desired property we needed to establish  new criteria for ergodicity of certain types of cocycle extensions. Our example has inspired us to introduce the notion of conditional disjointness. This 
idea and its applicability to studying uncorrelation to any ergodic measure are presented in the Appendix at the end of the paper.

Since uncorrelation is essentially weaker than disjointness, another question arises:
\begin{que}\label{que8}
Is the property of being uncorrelated to any ergodic measure an isomorphism invariant?
\end{que}

Again, the answer turns out negative. This property is not even invariant under topological conjugacy. That is, after transforming the shift space via an injective sliding block code (in this manner the shift space is modeled inside another symbolic space), a measure uncorrelated to any ergodic measures may lose this property. It is so, because the ``first symbol value'' function $x_1$ may dramatically change in the sense of information content. This is illustrated in another example provided in the final section. So, the property must be regarded as one of the shift-\im\ in a particular symbolic representation. 

\smallskip

We continue with further criteria for correlation with an ergodic measure. Note that the following one \emph{is} an isomorphism invariant:

\begin{thm}\label{thm4}
If the ergodic decomposition of $\mu$ has an atom, which is not the Dirac measure at a fixpoint of the shift transformation, then $\mu$ is correlated to an ergodic measure.
\end{thm}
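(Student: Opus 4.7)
The plan is to select the atomic ergodic component of $\mu$ itself as the ergodic measure to which we will correlate. Specifically, write $\mu=\alpha\nu+(1-\alpha)\mu'$, where $\alpha>0$, $\nu$ is an ergodic shift-invariant measure which is not a Dirac mass at a fixpoint, and $\mu'$ is the remaining invariant measure (possibly zero, in which case the argument only becomes easier since we are already ergodic, excluding the fixpoint case).

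The correlating joining $\xi$ of $\mu$ and $\nu$ I would use is
$$
\xi=\alpha\,\Delta_\nu+(1-\alpha)\,(\mu'\times\nu),
$$
where $\Delta_\nu$ denotes the diagonal self-joining of $\nu$ (the pushforward of $\nu$ under $x\mapsto(x,x)$) and $\mu'\times\nu$ is the product joining. A quick marginal check confirms that the first marginal is $\alpha\nu+(1-\alpha)\mu'=\mu$ and the second is $\alpha\nu+(1-\alpha)\nu=\nu$, so this is indeed a joining of $\mu$ with $\nu$.

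Now I would compute the quantity in Definition~\ref{defn3}. Setting $a=\int x_1\,d\nu$ and $b=\int x_1\,d\mu'$, an immediate calculation gives
$$
\int x_1\overline{y_1}\,d\xi-\int x_1\,d\mu\cdot\int\overline{y_1}\,d\nu=\alpha\!\int|x_1|^2\,d\nu+(1-\alpha)b\overline{a}-(\alpha a+(1-\alpha)b)\overline{a},
$$
which simplifies to $\alpha\bigl(\int|x_1|^2\,d\nu-|a|^2\bigr)$, i.e. $\alpha$ times the $\nu$-variance of the first-symbol function $x_1$. Since $\alpha>0$, it remains only to rule out that this variance vanishes.

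The final step is the observation that if $x_1$ were $\nu$-a.s.\ equal to a constant $c\in K$, then by shift-invariance of $\nu$ every coordinate function $x_n$ would also be $\nu$-a.s.\ equal to $c$, forcing $\nu$ to be the Dirac measure at the fixpoint $(c,c,c,\dots)$, contrary to our assumption. Hence the variance is strictly positive, the displayed difference is nonzero, and $\xi$ is a correlating joining. This is really a one-line calculation once the right joining is spotted; the only mild obstacle is recognizing that convex-combining the diagonal on the atomic component with the product measure on the rest produces the desired joining without losing the correct marginals.
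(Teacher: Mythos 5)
Your proof is correct and follows essentially the same route as the paper: the same decomposition $\mu=p\nu+(1-p)\mu'$, the same joining $p\nu_\Delta+(1-p)(\mu'\times\nu)$, and the same computation reducing the correlation to $p$ times the $\nu$-variance of $x_1$, which is positive unless $\nu$ is a point mass at a fixpoint. No further comment is needed.
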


\begin{proof} Let $\nu$ be the ergodic measure which is the atom of the ergodic decomposition of $\mu$. That is, 
$$
\mu = p\nu + (1-p)\mu',
$$ where $p\in (0,1]$ and $\mu'$ is some \im. Then $\mu$ admits the following joining with $\nu$: 
$$
\xi = p\nu_\Delta + (1-p)(\mu'\times\nu), 
$$
where $\nu_\Delta$ is the \emph{identity joining} of $\nu$ with itself, supported by the diagonal, and
$\mu'\times\nu$ denotes the product measure. It is now elementary to verify the correlation between $\mu$ and $\nu$ using $\xi$:
\begin{multline*}
\int x_1\overline{y_1}\,d\xi - \int x_1\,d\mu\int\overline{y_1}\,d\nu = \\
p\int x_1\overline{y_1}\,d\nu_\Delta + (1-p)\int x_1\overline{y_1}\,d(\mu'\times\nu) -  
\left(p\int x_1\,d\nu + (1-p)\int x_1\,d\mu'\right) \int \overline{y_1}\,d\nu =\\
p\int |x_1|^2\,d\nu +(1-p)\int x_1\,d\mu'\int\overline{y_1}\,d\nu - p\left|\int x_1\,d\nu\right|^2  - (1-p)\int x_1\,d\mu'\int\overline{y_1}\,d\nu =\\
p\left(\int |x_1|^2\,d\nu - \left|\int x_1\,d\nu\right|^2 \right)\ge 0,
\end{multline*}
with equality holding only when $x_1$ is constant $\nu$-almost surely, that is when $\nu$ is concentrated 
at a fixpoint of the shift transformation.
\end{proof}

The theorem allows to determine, in particular, that an \im\ being a convex combination of finitely 
many ergodic components, at least one of which is not a pointmass, is correlated to some (strictly) ergodic measure.
\medskip

The main result of this section shows that Questions \ref{que2} and \ref{que7} are in fact equivalent. Note that correlation of measures has no weak or strong form.

\begin{thm}\label{thm5}
Suppose that $x\in K^\N$ is generic for an \im\ $\mu$. The following conditions are equivalent:
\begin{enumerate}
	\item $x$ is weakly correlated to a point generic for an ergodic measure,
	\item $x$ is strongly correlated to a point generic for an ergodic measure, 
	\item $\mu$ is correlated to an ergodic measure.
\end{enumerate}
\end{thm}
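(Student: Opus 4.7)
The plan is to close the triangle (2) $\Rightarrow$ (1) $\Rightarrow$ (3) $\Rightarrow$ (2). The first implication is immediate from the definitions, since $\liminf_n|\corr(x_{[1,n]},y_{[1,n]})|>0$ forces $\limsup_n|\corr(x_{[1,n]},y_{[1,n]})|>0$. All the content lies in the remaining two directions.

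For (1) $\Rightarrow$ (3), I would start with a point $y$, generic for an ergodic measure $\nu$, witnessing that $x$ is weakly correlated. First select a sub\sq\ $(n_k)$ along which $|\corr(x_{[1,n_k]},y_{[1,n_k]})|$ tends to its (positive) limsup; then, using weak-star compactness on the Cartesian square, pass to a sub-sub\sq\ $(n_{k_l})$ along which the pair $(x,y)$ semi-generates some shift-\im\ $\xi$ on $(K\times K)^\N$. The discussion following Definition \ref{semigen} then tells us that the three averages entering $\corr(x_{[1,n_{k_l}]},y_{[1,n_{k_l}]})$ converge to the integrals $\int x_1\overline{y_1}\,d\xi$, $\int x_1\,d\mu$, $\int\overline{y_1}\,d\nu$. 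The last two limits exist already along the full \sq\ by genericity of $x$ for $\mu$ and of $y$ for $\nu$, so $\xi$ must have $\mu$ and $\nu$ as its marginals. Consequently, $\xi$ is a joining of $\mu$ with the ergodic $\nu$ that realizes a nonzero value of the expression from Definition \ref{defn3}, which gives (3).

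For (3) $\Rightarrow$ (2), the essential tool is the ``lifting lemma'' announced in the abstract: given any joining $\xi$ of two shift-\im s, every point generic for the first marginal admits a partner $y$ in $K^\N$ such that $(x,y)$ is generic for $\xi$. Fix an ergodic $\nu$ and a correlating joining $\xi$ of $\mu$ with $\nu$, and apply the lifting lemma to the \emph{specific} $x$ of the theorem, obtaining $y$ with $(x,y)$ generic for $\xi$. Then $y$ is automatically generic for $\nu$ (projecting the genericity property to the second coordinate), and genericity of $(x,y)$ for $\xi$ implies that all three averages appearing in $\corr(x_{[1,n]},y_{[1,n]})$ converge, along the full sequence $n$, to the respective integrals defining $\corr(\mu,\nu)$; since $\xi$ was correlating, the difference in the limit has nonzero modulus. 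Hence $\liminf_n|\corr(x_{[1,n]},y_{[1,n]})|>0$, so $x$ and $y$ are strongly correlated, establishing (2).

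The main obstacle is not in this theorem, which is essentially a bookkeeping argument once the correct tools are at hand, but in the lifting lemma itself used to prove (3) $\Rightarrow$ (2); constructing the partner $y$ for a \emph{prescribed} generic $x$ (as opposed to merely finding \emph{some} generic pair for $\xi$) is the substantial ergodic-theoretic ingredient, which the paper develops separately.
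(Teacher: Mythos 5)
Your proposal is correct and follows essentially the same route as the paper: (2)$\Rightarrow$(1) is trivial, (1)$\Rightarrow$(3) is obtained by semi-generating a joining along a sub-sub\sq\ and identifying its marginals via genericity of $x$ and $y$, and (3)$\Rightarrow$(2) rests on the lifting lemma (Theorem \ref{lem1}), exactly as in the paper. You also correctly locate the real difficulty in the lifting lemma itself, whose proof the paper defers to the following section.
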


\begin{proof}
Suppose that $x$, which is generic for $\mu$, is weakly correlated to some $y$ generic for an ergodic measure $\nu$. This means that there exists a subsequence $(n_k)$ such that the limit $\lim_k\corr(x_{[1,n_k]},y_{[1,n_k]})$ exists and is different from zero. There is a sub-sub\sq\
$(n_{k_l})$ along which the pair $(x,y)$ semi-generates an \im\ $\xi$ on $(K\times K)^\N$. Exactly as in 
the proof of Theorem \ref{thm2a} (the argument involving $y'$, $\nu'$ and $\xi'$), $\xi$ is a correlating joining of $\mu$ and $\nu$.

Now, suppose that a point $x$ is generic for an \im\ $\mu$ which is correlated to an ergodic measure $\nu$. 
Let $\xi$ be a correlating joining of $\mu$ and $\nu$. We need to refer to the following fact, whose proof occupies the next section.

\begin{thm}\label{lem1}
Let $\xi$ be a joining of two shift-\im s $\mu$ and $\nu$ supported by $K^\N$, and let $x\in K^\N$ be a point generic for $\mu$. Then there exists a point $y\in K^\N$ such that the pair $(x,y)$ is generic for $\xi$ (in particular, $y$ is generic for $\nu$).
\end{thm}

It is obvious that by applying the above theorem to our situation we obtain a point $y$ generic for the ergodic measure $\nu$ and such that $x$ and $y$ are strongly correlated. This ends the proof of Theorem \ref{thm5}.
\end{proof}

\section{Generic points for joinings}

This section is devoted to proving Theorem \ref{lem1}. To avoid ambiguity, we will distinguish between \emph{free blocks} of length $m$, i.e., the elements of $K^m$ (there are precisely $\#K^m$ free blocks) and \emph{blocks} of length $m$, i.e., subblocks of length $m$ of a longer block $B$ or of a symbolic element $x$. There are precisely $|B|-m+1$ blocks of length $m$ in $B$. Each block is an \emph{occurrence} of a free block. Every free block $B\in K^n$ determines, for every $m\le n$, what we call the \emph{empirical measure} $\mu_B$ on the finite space $K^m$ of free blocks of length $m$, by the formula
$$
\mu_B(D) = \tfrac1n\#\{i\in[1,n-m+1]: B_{[i,i+m-1]}=D\}.
$$
(for the ease of computations, we divide by $n$ rather than by the more commonly used denominator $n-m+1$, as a result we obtain a sub-probabilistic vector).

\begin{defn} For (probabilistic or sub-probabilistic) measures on $K^{m}$ we define the distance
$$
d^{(m)}(\mu, \nu) = \sum_{D\in K^m}|\mu(D) - \nu(D)|.
$$
A block $B$ is said to be $(m,\epsilon)$-generic for an invariant (probability) measure $\mu$ on~$K^\N$ 
if $d^{(m)}(\mu_B,\mu)<\epsilon$. 
\end{defn}
The sub-probabilistic normalization of the empirical measures causes that any block $(m,\epsilon)$-generic for a probability measure has length at least $\frac m\epsilon$. 

Notice that a point $x$ is generic for a measure $\mu$ if and only if, for every positive integer $m$ and $\epsilon>0$, the blocks $x_{[1,n]}$ are eventually (i.e., for large $n$) $(m,\epsilon)$-generic for $\mu$.
\smallskip

We will need two technical lemmas.

\begin{lem}\label{gener}
Assume that $B\in K^n$ and $B_{[1,l]}$ are both $(m,\epsilon)$-generic for $\mu$, where  $l<n(1-\sqrt\epsilon)$. Then $B_{[l+1,n]}$ is $(m,3\sqrt\epsilon)$-generic for $\mu$.
\end{lem}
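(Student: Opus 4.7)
The plan is to exploit the obvious combinatorial additivity: every length-$m$ occurrence in $B$ either sits entirely inside $B_{[1,l]}$, or entirely inside $B_{[l+1,n]}$, or it straddles the boundary (starting at one of the $m-1$ positions $l-m+2,\dots,l$). Unraveling the definitions this gives, for each free block $D\in K^m$,
\begin{equation*}
n\,\mu_B(D) \;=\; l\,\mu_{B_{[1,l]}}(D) \;+\; (n-l)\,\mu_{B_{[l+1,n]}}(D) \;+\; E(D),
\end{equation*}
where $E$ is a nonnegative function on $K^m$ with $\sum_D E(D)\le m-1$ (one contribution per straddling occurrence). Solving for $\mu_{B_{[l+1,n]}}(D)$ and subtracting $\mu(D)$ (using $\frac{n}{n-l}-\frac{l}{n-l}=1$), I get the clean identity
\begin{equation*}
\mu_{B_{[l+1,n]}}(D)-\mu(D) \;=\; \tfrac{n}{n-l}\bigl(\mu_B(D)-\mu(D)\bigr) \;-\; \tfrac{l}{n-l}\bigl(\mu_{B_{[1,l]}}(D)-\mu(D)\bigr) \;-\; \tfrac{E(D)}{n-l}.
\end{equation*}

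Summing absolute values over $D\in K^m$ and applying the two genericity hypotheses bounds the right-hand side by
\begin{equation*}
\tfrac{n+l}{n-l}\,\epsilon \;+\; \tfrac{m-1}{n-l}.
\end{equation*}
The hypothesis $l<n(1-\sqrt\epsilon)$ yields $n-l>n\sqrt\epsilon$, hence $\frac{n+l}{n-l}<\frac{2n}{n\sqrt\epsilon}=\frac{2}{\sqrt\epsilon}$, so the first term is strictly less than $2\sqrt\epsilon$. For the second term I invoke the length lower bound recalled right after the definition: because $B$ is $(m,\epsilon)$-generic for a \emph{probability} measure and $\mu_B$ has total mass $(n-m+1)/n$, the triangle inequality for $d^{(m)}$ forces $\frac{m-1}{n}<\epsilon$, so $m-1<n\epsilon$, and therefore $\frac{m-1}{n-l}<\frac{n\epsilon}{n\sqrt\epsilon}=\sqrt\epsilon$. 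Adding the two contributions gives $d^{(m)}(\mu_{B_{[l+1,n]}},\mu)<3\sqrt\epsilon$, as required.

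There is no real obstacle here; the whole argument is a bookkeeping exercise. The only subtle point that needs to be stated carefully is the counting of boundary occurrences (exactly $m-1$ starting positions are lost at the cut) and the fact that the sub-probabilistic normalization of the empirical measure is what forces the free lower bound on $n$ and hence controls the boundary term. Everything else is just a triangle inequality and the numerical estimate coming from $n-l>n\sqrt\epsilon$.
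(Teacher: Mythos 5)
Your proof is correct and follows essentially the same route as the paper's: both decompose the occurrence counts of each free block $D$ additively across the cut (with an error of at most $m-1$ straddling occurrences), then combine the two genericity hypotheses with the bounds $n-l>n\sqrt\epsilon$ and $m-1<n\epsilon$ (the latter forced by the sub-probabilistic normalization) to get $\frac{n+l}{n-l}\epsilon+\frac{m-1}{n-l}<3\sqrt\epsilon$. Your explicit identity for $\mu_{B_{[l+1,n]}}(D)-\mu(D)$ is just a tidier rewriting of the paper's ``subtract sidewise'' step.
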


\begin{proof}
We have
$$
\sum_{D\in K^m}\left|\#\{i\in[1,n-m+1]:B_{[i,i+m-1]}=D\}-\mu(D)\right| = nd^{(m)}(\mu_B,\mu) < n\epsilon 
$$
and the same for $l$ in place of $n$. Thus, subtracting sidewise, we obtain
\begin{multline*}
\sum_{D\in K^m}\left|\#\{i\in[l-m+2,n-m+1]:B_{[i,i+m-1]}=D\}-(n-l)\mu(D)\right|<\\(n+l)\epsilon,
\end{multline*}
The above calculation misses $m-1$ blocks of length $m$ appearing on the left end of $B_{[l+1,n]}$. Adding this number to the right hand side and dividing both sides by $n-l$ we obtain
$$
\sum_{D\in K^m}\left|\mu_{B_{[l+1,n]}}(D)-\mu(D)\right|<\\\frac{n+l}{n-l}\epsilon + \frac{m-1}{n-l}\le \frac {3\epsilon n}{n-l}
$$
(we have also used the inequality $m<\epsilon n$, which holds since $B$ is $(m,\epsilon)$-generic).
Applying the inequality $n-l>n\sqrt\epsilon$ we finish the proof.
\end{proof}

The easy proofs of the following facts are left to the reader. All blocks addressed below are longer than $m$, the empirical measures are defined on $K^m$, and the distance between them is $d^{(m)}$.

\begin{lem}\label{generq} {\color{white}.}
\begin{enumerate}
	\item Let $B\in K^n$. Then the empirical measures determined by $B$ and $B_{[l+1,n-k]}$ 
	are at most $2\frac{l+k}n$ apart.	
	\item Suppose the empirical measures determined by two blocks $B$ and $C$ of the same length are less than $\epsilon$ apart, and the same holds for a pair $B',C'$. Then the empirical measures determined by the concatenations $BC$ and $B'C'$ are less than $2\epsilon$ apart, assuming that the joint length of the concatenation exceeds $\frac m\epsilon$.
\end{enumerate}	
\end{lem}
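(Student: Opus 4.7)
My plan for both parts is to translate the empirical measures into raw counts of $m$-block occurrences at specified starting positions, and then account carefully for which positions appear on each side.

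For part (1), I would set $B'' = B_{[l+1,n-k]}$ and $n' = n - l - k$, and note that $n\mu_B(D)$ and $n'\mu_{B''}(D)$ are counts of $D$-occurrences over the starting index ranges $[1, n-m+1]$ and $[l+1, n-k-m+1]$ respectively, viewed as counts inside $B$. The symmetric difference of these ranges has at most $l+k$ elements, yielding $\sum_D |n\mu_B(D) - n'\mu_{B''}(D)| \le l+k$. The algebraic identity
\[
\mu_B(D) - \mu_{B''}(D) = \tfrac{1}{n}\bigl(n\mu_B(D) - n'\mu_{B''}(D)\bigr) - \mu_{B''}(D)\cdot\tfrac{l+k}{n},
\]
combined with $\sum_D \mu_{B''}(D)\le 1$, then gives $d^{(m)}(\mu_B,\mu_{B''})\le 2(l+k)/n$ after summing absolute values over $D$.

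For part (2), I would read the hypothesis as $|B|=|B'|=n_1$ and $|C|=|C'|=n_2$ (so that the concatenations share a common length). Splitting the starting indices $[1,n_1+n_2-m+1]$ of $BC$ into those lying wholly in $B$, wholly in $C$, or straddling the boundary produces the identity
\[
(n_1+n_2)\mu_{BC}(D) = n_1\mu_B(D) + n_2\mu_C(D) + e_{BC}(D),
\]
where $e_{BC}$ is a nonnegative vector on $K^m$ with $\sum_D e_{BC}(D) = m-1$ (one unit per boundary-straddling $m$-block), and the identical identity holds for $B'C'$. Subtracting, applying the triangle inequality, and invoking the two $\epsilon$-closeness hypotheses, I get
\[
(n_1+n_2)\,d^{(m)}(\mu_{BC},\mu_{B'C'}) \le (n_1+n_2)\epsilon + 2(m-1),
\]
so $d^{(m)}(\mu_{BC},\mu_{B'C'}) \le \epsilon + 2(m-1)/(n_1+n_2)$. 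The size hypothesis on the joint concatenation length then absorbs the boundary term into the stated error.

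The only point that requires care is the boundary accounting in (2): one must confirm that the $m-1$ straddling positions contribute at most $2(m-1)$ to the $\ell^1$ difference of the corrections $e_{BC}$ and $e_{B'C'}$, and that the assumption on the joint length is precisely what is needed to push this into the claimed $2\epsilon$ regime. Part (1) is pure counting and presents no real obstacle.
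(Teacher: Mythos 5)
The paper leaves this lemma without proof (``the easy proofs of the following facts are left to the reader''), so there is no argument of the authors to compare against; yours supplies the natural one. Part (1) is completely correct: the identity $\mu_B(D)-\mu_{B''}(D)=\tfrac1n\bigl(n\mu_B(D)-n'\mu_{B''}(D)\bigr)-\mu_{B''}(D)\tfrac{l+k}{n}$ checks out (using $n'+l+k=n$), the difference of the two ranges of starting positions has exactly $l+k$ elements, and $\sum_D\mu_{B''}(D)\le 1$ closes the estimate, giving the constant $2$ exactly as stated. Your reinterpretation of the hypothesis of (2) --- pairing $B$ with $B'$ and $C$ with $C'$, with $|B|=|B'|=n_1$ and $|C|=|C'|=n_2$ --- is not only reasonable but necessary: as literally written (closeness of $\mu_B$ to $\mu_C$ and of $\mu_{B'}$ to $\mu_{C'}$) the conclusion is false (take $B=C$ constant $0$ and $B'=C'$ constant $1$), and the reading you adopt is exactly the one used in case (b) of the proof of Theorem \ref{lem1}.

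The one point to flag is the final step of (2). Your decomposition $(n_1+n_2)\mu_{BC}=n_1\mu_B+n_2\mu_C+e_{BC}$ with $e_{BC}\ge0$ and $\sum_D e_{BC}(D)=m-1$ is correct, and so is the resulting bound $d^{(m)}(\mu_{BC},\mu_{B'C'})<\epsilon+\tfrac{2(m-1)}{n_1+n_2}$. But the stated hypothesis $n_1+n_2>\tfrac m\epsilon$ only yields $\tfrac{2(m-1)}{n_1+n_2}<2\epsilon$, hence a total of $3\epsilon$ rather than the claimed $2\epsilon$; to land under $2\epsilon$ one needs $n_1+n_2\ge\tfrac{2(m-1)}{\epsilon}$. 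So the length assumption is not ``precisely what is needed,'' and the discrepancy lies in the lemma's constants rather than in your argument: the $\ell^1$ distance of the two boundary corrections genuinely can be as large as $2(m-1)$, so no cleverer accounting rescues the constant. This is immaterial for the paper, where the lemma is invoked only to conclude that two empirical measures are close to within $O(\epsilon_m)$ and any absolute constant in place of $2$ would serve; if you want the statement as printed, either strengthen the hypothesis to $n_1+n_2>\tfrac{2m}{\epsilon}$ or weaken the conclusion to $3\epsilon$.
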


We pass to the main proof of this section.

\begin{proof}[Proof of Theorem \ref{lem1}]

Let us start by taking a pair $(x',y')$, generic for the joining $\xi$ (such a pair exists in $(K\times K)^\N$). Clearly, the points $x'$ and $y'$ are generic for the projections $\mu$ and $\nu$, respectively. We will think of $(x',y')$ as of a sequence consisting of two rows, $x'$ in the first row, $y'$ in the second.

Choose a decreasing to zero \sq\ of positive numbers $\epsilon_m$ ($m\ge 1$). By the Rokhlin Lemma applied to the system with the measure $\mu$ we have: for each $m$ there exists a Rokhlin tower of height $m$, with remainder of measure less than $\epsilon_m$. By a standard argument, the bases of the towers (and hence the remainders) can be chosen closed-and-open (we will say \emph{clopen}). By lifting to the product space we also have Rokhlin towers for the measure $\xi$.

Throughout the following three paragraphs we fix one value of the parameter $m$. For each point in the product system (i.e., \sq\ in $(K\times K)^\N$), we mark the times of the visits in the base of the Rokhlin tower by an additional symbol (star) and the visits in the remainder of the tower by another additional symbol, say, a dagger. Because we chose the bases clopen, adding the stars and daggers is a topological conjugacy of $(K\times K)^\N$ with a subsystem of $(\K\times\K)^\N$, where $\K = K\times\{\star,\dagger,\emptyset\}$. In particular, generic points are preserved. We can think of $\xi$ as a measure supported by $(\K\times\K)^\N$ (and $\mu$ as a measure supported by $\K^\N$), we will call this the \emph{marked representation}. Every point generic for $\mu$ (including $x$ and $x'$) has, in this representation, the structure of a concatenation of blocks of the length $m$ starting with the star (we will call these blocks \emph{$m$-blocks}), separated by strings, of various lengths, of symbols marked by the dagger (the \sq\ may start with a prefix of length at most $m-1$, without any markers). The daggers occur with density less than $\epsilon_m$, the stars have density between  $\frac{1-\epsilon_m}m$ and $\frac1m$, every free $m$-block occurs with the density equal to its measure. We let $l_m$ be such that for every $n\ge l_m$ the block $(x',y')_{[1,n]}$ is $(m,\epsilon_m)$-generic for $\xi$ (in particular, the block $x'_{[1,n]}$ is $(m,\epsilon_m)$-generic for $\mu$) and also the block $x_{[1,n]}$ is $(m,\epsilon_m)$-generic for $\mu$. We can inductively arrange that $l_{m+1}$ is larger than $\frac{l_m}{\epsilon_m}$. This proportion and Lemma \ref{generq} (1) imply that for $n\ge l_{m+1}$ the blocks $(x',y')_{[l_m+1,n]}$ and $x_{[l_m+1,n]}$ are $(m,3\epsilon_m)$-generic for $\xi$ and $\mu$, respectively.

\smallskip
We will now describe the inductive $m$th step of our construction, in which we define the second row $y$ in the pair $(x,y)$ between the positions $l_m+1$ and $l_{m+1}$ (we let $y_{[1,l_1]}$ be defined arbitrarily). 

Given a free $m$-block $A$ over the alphabet $K$, we locate all its occurrences in both $x_{[l_m+1,l_{m+1}]}$ and $x'_{[l_m+1,l_{m+1}]}$ (sitting completely inside). Next we define $y$ at the positions corresponding to the occurrences of $A$ in $x$ proceeding from left to right and rewriting consecutive symbols of $y'$ from the positions corresponding to the occurrences of $A$ in $x'_{[l_m+1,l_{m+1}]}$, maintaining the order. We continue until we exhaust the available positions in either $x_{[l_m+1,l_{m+1}]}$ or $x'_{[l_m+1,l_{m+1}]}$ (or both). We perform this procedure separately, for every free $m$-block $A$. Note that since the $m$-blocks always occur separately (without overlapping), there will be no collision at any position of $y$. As a result, nearly all two-row $m$-blocks occurring in $(x',y')_{[l_m+1,l_{m+1}]}$ will be copied in $(x,y)_{[l_m+1,l_{m+1}]}$; there is a 1-1 correspondence between nearly all $m$-blocks here and here. The correspondence is defined except on a small percentage of the $m$-blocks here and here, mainly occurring near the right end (for each free $m$-block $A$ the number of occurrences in $x'_{[l_m+1,l_{m+1}]}$ may be slightly larger or slightly smaller than in $x_{[l_m+1,l_{m+1}]}$ -- in either case the \emph{excessive} $m$-blocks on one side have to be excluded from the 1-1 correspondence). Precisely, since both first-row blocks are $(m,3\epsilon_m)$-generic for $\mu$, this percentage does not exceed $6\epsilon_m$. In the end, there may remain some unfilled positions in $y_{[l_m+1,l_{m+1}]}$; we fill them arbitrarily. This concludes the definition of~$y$. 
\smallskip

In order to verify that the above defined pair $(x,y)$ is generic for $\xi$ we will compare, for every positive integer $m_0$, the empirical measures on $(K\times K)^{m_0}$ determined by the initial blocks $(x,y)_{[1,n]}$ and $(x',y')_{[1,n]}$. Since the latter tend to $\xi$ with increasing $n$, it suffices to prove that the distance between the above two empirical measures tends to zero. So, we fix an $m_0$ and $n$ larger than $l_{m_0+1}$. We let $m\ge m_0$ be the largest index such that $n \ge l_{m+1}$. Note that $m$ implicitly depends on $n$ and tends to infinity as $n$ grows, so that all expressions of order $\epsilon_m$, $\sqrt{\epsilon_m}$ or $\frac1m$ tend to zero with $n$. To avoid notorious repetitions of the phrase ``blocks of length $m_0$ over $K\times K$'' (as opposed to other blocks referred to in the argument) we will call them shortly the \emph{words}. From now on \emph{empirical measures} are always defined on so understood free words.
\smallskip

We begin by proving that the empirical measures determined by $(x,y)_{[l_m+1,l_{m+1}]}$ and $(x',y')_{[l_m+1,l_{m+1}]}$ are close. We classify the words contained in $(x,y)_{[l_m+1,l_{m+1}]}$ in four groups:
\begin{enumerate}	
	\item[(I)] words not inside any $m$-block; these can be recognized by either having a star \emph{not} at position 1, or a dagger anywhere (we mean the stars and daggers created in step $m$), 
	\item[(II)] words contained inside any of the two possible $m$-blocks appearing at either end of and only partly contained in $(x,y)_{[l_m+1,l_{m+1}]}$,
	\item[(III)] words contained within an $m$-block sitting inside $(x,y)_{[l_m+1,l_{m+1}]}$, but being	the ``excessive'' $m$-block (excluded from the aforementioned 1-1 correspondence),
	\item[(IV)] words contained within an $m$-block sitting inside $(x,y)_{[l_m+1,l_{m+1}]}$ and included in the 1-1 correspondence. 
\end{enumerate}

By the $(m,3\epsilon_m)$-genericity and thus also $(1,3\epsilon_m)$-genericity of $x_{[l_m+1,l_{m+1}]}$ for $\mu$, the first group constitutes a small percentage of all considered words (smaller than $m_0(3\epsilon_m+\frac1m+\epsilon_m)$). The same genericity implies small proportion between $m$ and the length of $x_{[l_m+1,l_{m+1}]}$, hence smallness of the second group. We already know that among all $m$-blocks contained in $(x,y)_{[l_m+1,l_{m+1}]}$ only
a small percentage (at most $6\epsilon_m$) is excluded from the 1-1 correspondence. This implies that the third group is small, as well. We have shown that the last group dominates (constitutes a percentage converging to 1 with growing $n$) of all words in $(x,y)_{[l_m+1,l_{m+1}]}$. The 1-1 correspondence between 
the $m$-blocks induces, in an obvious way, a 1-1 correspondence between words in group (IV) and words in the
analogous group (IV) regarded for $(x',y')_{[l_m+1,l_{m+1}]}$ (which, by a symmetric argument, dominates among all words in $(x',y')_{[l_m+1,l_{m+1}]}$). This clearly implies that the empirical measures determined by $(x',y')_{[l_m+1,l_{m+1}]}$ and $(x,y)_{[l_m+1,l_{m+1}]}$ are close, as desired.
\smallskip

Now consider two cases: 
\begin{enumerate}
	\item[(a)] $n-l_{m+1}\le n\sqrt{\epsilon_m}$, \  and 
	\item[(b)] the opposite. 
\end{enumerate}

In case (a), $(x,y)_{[l_m+1,l_{m+1}]}$ is obtained from $(x,y)_{[1,n]}$ by truncating at most
$n(\epsilon_m+\sqrt{\epsilon_m})$ terms at the ends, and the same holds for $(x',y')$. Thus, Lemma~\ref{generq}~(1) (applied twice) and the closeness of the empirical measures of $(x,y)_{[l_m+1,l_{m+1}]}$ and $(x',y')_{[l_m+1,l_{m+1}]}$ imply closeness of the empirical measures of $(x,y)_{[1,n]}$ and $(x',y')_{[1,n]}$, which concludes this case.
\smallskip

In case (b), we will argue that also the empirical measures determined by the blocks $(x,y)_{[l_{m+1}+1,n]}$ and $(x',y')_{[l_{m+1}+1,n]}$ are close. Observe that the closeness of the empirical measures determined by
$(x,y)_{[l_m+1,l_{m+1}]}$ and $(x',y')_{[l_m+1,l_{m+1}]}$ was deduced using exclusively the fact
that the blocks $x_{[l_m+1,l_{m+1}]}$ and $x'_{[l_m+1,l_{m+1}]}$ were $(m,3\epsilon_m)$-generic for $\mu$.
Now we are in a very similar situation: since $l_{m+1}<n(1-\sqrt{\epsilon_m})$, we can use Lemma~\ref{gener} to conclude that the blocks $x_{[l_{m+1}+1,n]}$ and $x'_{[l_{m+1}+1,n]}$ are both $(m+1,3\sqrt{\epsilon_{m+1}})$-generic for $\mu$.
Because the algorithm of defining $y_{[l_{m+1}+1,l_{m+2}]}$ proceeds from left to right, we can stop it when we reach the coordinate $n$ and we will have the 1-1 correspondence already defined between majority of $(m\!+\!1)$-blocks in $(x,y)_{[l_{m+1}+1,n]}$ and $(x',y')_{[l_{m+1}+1,n]}$. From here the closeness of the empirical measures determined by $(x,y)_{[l_{m+1}+1,n]}$ and $(x',y')_{[l_{m+1}+1,n]}$ (with $\sqrt{\epsilon_{m+1}}$ replacing $\epsilon_m$ in the estimates) follows as in the preceding argument. 

Now Lemma \ref{generq}~(2) yields closeness of the empirical measures of $(x,y)_{[l_m+1,n]}$ and
$(x',y')_{[l_m+1,n]}$ (viewed as appropriate concatenations) and one more application of (1) extends this to $(x,y)_{[1,n]}$ and $(x',y')_{[1,n]}$. The proof in case (b) is complete.
\end{proof}

\section{General \sq s}
A general \sq\ over the alphabet $K$ is typically not generic for any \im\ but it is semi-generic for a 
range of \im s. As we will show, in such case we can decide about its uncorrelation versus weak correlation to a (strictly) ergodic \sq\ by examining all the measures semi-generated by $x$. We cannot expect strong correlation results in this case. The main theorem of this section requires a lemma similar to Theorem \ref{lem1}.

\begin{lem}\label{semgengen}
Let $x\in K^\N$ be semi-generic, along a subsequence $(n_k)$, for an \im\ $\mu$, and let $\xi$ be a
joining of $\mu$ with a strictly ergodic measure $\nu$. Then there exists a point $y$ generic for 
$\nu$ and such that the pair $(x,y)$ semi-generates $\xi$ along a sub-sub\sq\ of $(n_k)$. 
\end{lem}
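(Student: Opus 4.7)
The plan is to adapt the construction from the proof of Theorem \ref{lem1} to the semi-generic setting, and to exploit strict ergodicity of $\nu$ to upgrade the resulting $y$ to a genuinely generic point. I would begin exactly as there, taking a pair $(x',y')$ generic for $\xi$ in the full product shift; then $x'$ is generic for $\mu$, while $y'$ lies in $Y:=\mathrm{supp}(\nu)$, a strictly ergodic subshift in which \emph{every} point is generic for $\nu$.

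First I would pass to a sub-sub\sq\ $(n_{k_l})$ of $(n_k)$ growing so rapidly that, for suitably chosen $m_l\to\infty$ and $\epsilon_l\to 0$, the block $x_{[1,n_{k_l}]}$ is $(m_l,\epsilon_l)$-generic for $\mu$ and, by Lemma \ref{gener}, the intermediate piece $x_{(n_{k_{l-1}},n_{k_l}]}$ remains approximately generic at the scale $m_l$. On each interval $(n_{k_{l-1}},n_{k_l}]$ I would then rerun the marked-$m_l$-block procedure from the proof of Theorem \ref{lem1}: for every free $m_l$-block $A$ over $K$, match its occurrences in $x_{(n_{k_{l-1}},n_{k_l}]}$ with its occurrences in $x'_{(n_{k_{l-1}},n_{k_l}]}$, and copy the aligned entries of $y'$ into $y$. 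The same four-group empirical-measure accounting as in Theorem \ref{lem1} then shows that, for every fixed $m_0$, the empirical measures of $(x,y)_{[1,n_{k_l}]}$ on $(K\times K)^{m_0}$ converge to the corresponding marginal of $\xi$, so $(x,y)$ semi-generates $\xi$ along $(n_{k_l})$.

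What is not automatic is that $y$ be \emph{fully} generic for $\nu$: the construction only yields semi-genericity along $(n_{k_l})$. This is precisely where strict ergodicity must intervene. Each pasted $m_l$-block is a sub-block of $y'\in Y$, hence already legal in $Y$; the only potentially illegal segments are the transitions between consecutive pasted blocks and the arbitrary fillers placed on ``dagger'' (tower-remainder) coordinates. By minimality of $Y$, any two legal words can be connected by a bridge of bounded length, so I would replace the arbitrary fillers between two $m_l$-blocks by such bridges whenever the dagger run is long enough. Bridges have bounded length and dagger coordinates have asymptotic density zero, so this modification perturbs the empirical measures of $(x,y)$ only by vanishing amounts. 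With this modification $y$ lies in $Y$, and is therefore generic for $\nu$ by strict ergodicity.

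The hard part, I expect, is handling the exceptional pairs of consecutive $m_l$-blocks separated by too short a dagger run to admit a bridge. The cleanest fix is to sacrifice one of the two $m_l$-blocks in such a pair, absorbing its contribution into the ``group~(III)'' of excessive $m_l$-blocks already analyzed in the proof of Theorem~\ref{lem1}. Controlling the density of such exceptional pairs---and making sure that, together with the ``group~(I)--(III)'' losses, they still go to zero---will require some care in the choice of the Rokhlin tower (for example, insisting on towers in which the base and its $m_l$-th iterate have small intersection) and a quantitative bookkeeping in the spirit of Lemmas~\ref{gener} and \ref{generq}.
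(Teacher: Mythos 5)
Your first two paragraphs follow the paper's route: take $(x',y')$ generic for $\xi$, choose the scales $l_m$ from the given subsequence $(n_k)$, and rerun the marked-$m$-block matching of Theorem~\ref{lem1} to get a pair $(x,y)$ that semi-generates $\xi$ along a sub-subsequence. (The paper even notes that in this setting you do not need Lemma~\ref{gener} or the case analysis (a)/(b), since convergence is only required along the chosen subsequence; invoking them is harmless but superfluous.) The divergence, and the genuine gap, is in your third and fourth paragraphs, where you try to establish full genericity of $y$ by forcing $y$ into $Y=\mathrm{supp}(\nu)$ with ``bridges'' and then invoking unique ergodicity. Two things go wrong. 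First, there is essentially never room for a bridge: the Rokhlin remainder has measure less than $\epsilon_m$, so the dagger runs separating consecutive $m$-blocks are typically \emph{empty} --- adjacency of consecutive pasted blocks is the rule, not the exception. A bridge between two $m$-blocks of $Y$ must in general have length comparable to the repetitivity function of $Y$ at scale $m$ (which is at least of order $m$), so inserting one between every pair, or sacrificing blocks to make room, removes a non-vanishing proportion of the matched material and destroys the very semi-generation of $\xi$ you just established. Second, even if bridges fit, legality of each window $A\cdot b\cdot B$ does not imply that the infinite concatenation lies in $Y$: a subshift forbids words of all lengths, and gluing locally legal overlapping windows can create forbidden long words. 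So the modified $y$ need not belong to $Y$, and the appeal to ``every point of a strictly ergodic system is generic'' is not available.

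The correct (and much simpler) use of strict ergodicity here is different: it gives \emph{uniform} genericity of $y'$, i.e., for every $m_0$ and $\epsilon>0$ there is an $N$ such that \emph{every} block of length at least $N$ occurring in $y'$ is $(m_0,\epsilon)$-generic for $\nu$. Your $y$ is, by construction, a concatenation of blocks of length $m$ occurring in $y'$, with $m\to\infty$ and with fillers of asymptotic density zero. Hence for every $n$ (not only along the subsequence) the empirical measure of $y_{[1,n]}$ on $K^{m_0}$ is within $o(1)$ of $\nu$: the segment decomposes into copied blocks of $y'$ that are eventually all $(m_0,\epsilon)$-generic, plus boundary effects of relative size $O(m_0/m)$ and fillers of vanishing density, and the short prefix built at small scales has vanishing relative length. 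This yields genericity of $y$ for $\nu$ directly, with no need for $y$ to lie in $\mathrm{supp}(\nu)$ and no bridges at all. I recommend replacing your bridging argument by this one.
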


\begin{proof} We will only outline the proof, focusing on the details which are different than in the proof of Theorem \ref{lem1}.

We pick a pair $(x',y')$ generic for $\xi$. Since $\nu$ is strictly ergodic, we can assume that $y'$
is strictly ergodic, which implies that it is \emph{uniformly generic}, i.e., for every integer $m$ 
and $\epsilon>0$ there exists an $n$ such every block of length at least $n$, occurring in $y'$, is $(m,\epsilon)$-generic for $\nu$. 

As in the other proof, we fix a decreasing to zero \sq\ $\epsilon_m$ and, for each $m$, we create the
marked representation with the $m$-blocks. We select the lengths $l_m$ from the sub\sq\ $(n_k)$ so that 
the blocks $x_{[1,l_m]}$ and $(x',y')_{[1,l_m]}$ are $(m,\epsilon_m)$-generic for $\mu$ and $\xi$ respectively. As before, we arrange that $l_{m+1}$ is larger than $\frac{l_m}{\epsilon_m}$, but we do 
not care about genericity of initial blocks of other lengths $n$. The inductive step of defining $y$ is identical as before. The verification that $(x,y)$ semi-generates $\xi$ along $(l_m)$ (which is a sub-sub\sq\ of $(n_k)$) is simplified; since we do not care about other lengths $n$ we do not need to consider the two cases (a) and (b), or invoke Lemma \ref{gener}. 

We need, however, an additional argument to prove that $y$ is generic for $\nu$ (it is obvious that $y$ semi-generates $\nu$ along $(l_m)$). But this fact follows easily from the assumption that $y'$ is uniformly generic for $\nu$ and that $y$ is built as a concatenation (with insertions of density zero) of longer and longer blocks occurring in $y'$.
\end{proof}

\begin{thm}\label{genseq}
A \sq\ $x\in K^\N$ is weakly correlated to a \sq\ generic for an ergodic measure if and only if $x$ 
is semi-generic for at least one measure correlated to an ergodic measure.
\end{thm}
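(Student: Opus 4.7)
The plan is to derive both directions by combining the already-proved Lemma~\ref{semgengen} with Theorem~\ref{thm2a}, essentially repeating the equivalence ``(1)$\Leftrightarrow$(3)'' of Theorem~\ref{thm5} but with ``generic'' weakened to ``semi-generic.''

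\emph{Forward direction.} Suppose $x$ is weakly correlated to a point $y$ generic for an ergodic measure $\nu$. By the definition of weak correlation there is a subsequence $(n_k)$ such that $\corr(x_{[1,n_k]},y_{[1,n_k]})$ tends to some nonzero limit $c$. By weak-star compactness, pass to a sub-subsequence $(n_{k_l})$ along which the pair $(x,y)$ semi-generates some shift-invariant measure $\xi$ on $(K\times K)^\N$. As recorded in Section~1, along such a sub-subsequence the three averages from the definition of correlation converge to the corresponding integrals with respect to $\xi$, $\mu$, $\nu$; here $\mu$ is the first marginal of $\xi$ (semi-generated by $x$ along $(n_{k_l})$) and the second marginal is $\nu$ because $y$ is generic for $\nu$. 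Consequently $\xi$ is a joining of $\mu$ and $\nu$, and the correlation-expression computed from $\xi$ equals $c\neq 0$. Thus $\mu$ is correlated (via $\xi$) to the ergodic measure $\nu$, as required.

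\emph{Backward direction.} Suppose $x$ semi-generates, along some subsequence $(n_k)$, a measure $\mu$ correlated to an ergodic measure. By Theorem~\ref{thm2a}, $\mu$ is in fact correlated to some strictly ergodic measure $\nu'$; let $\xi'$ be a correlating joining of $\mu$ and $\nu'$. Apply Lemma~\ref{semgengen} to $x$, $\mu$, $\nu'$, $\xi'$: this produces a point $y$ generic for $\nu'$ such that $(x,y)$ semi-generates $\xi'$ along a sub-subsequence $(n_{k_l})$ of $(n_k)$. Along $(n_{k_l})$ the three averages entering $\corr(x_{[1,n_{k_l}]},y_{[1,n_{k_l}]})$ converge to the three integrals that add up to the nonzero correlation value of $\xi'$, so the corresponding correlations have a nonzero limit. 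This shows that $x$ is weakly correlated to $y$, which is generic for the ergodic measure $\nu'$.

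\emph{Main obstacle.} Neither direction requires new ideas: the delicate construction of the second row $y$ is hidden inside Lemma~\ref{semgengen}, and the passage from ``ergodic'' to ``strictly ergodic'' is handled by Theorem~\ref{thm2a}. The only place where care is needed is that Lemma~\ref{semgengen} is stated only for strictly ergodic $\nu$, which is why invoking Theorem~\ref{thm2a} in the backward direction is essential: a correlating joining with the originally given ergodic $\nu$ would not suffice to apply the lemma directly. Given these two tools, the whole argument is a bookkeeping of the convergence of the three averages that together form the correlation.
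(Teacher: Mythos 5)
Your proof is correct and follows essentially the same route as the paper: the forward direction extracts a sub-subsequence along which $(x,y)$ semi-generates a correlating joining, and the backward direction invokes Theorem~\ref{thm2a} to replace the ergodic measure by a strictly ergodic one before applying Lemma~\ref{semgengen}. Your remark that the reduction to strict ergodicity is what makes Lemma~\ref{semgengen} applicable is exactly the point the paper's proof relies on.
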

Rephrasing the theorem, $x$ is uncorrelated to any \sq\ generic for an ergodic (equivalently, strictly ergodic) measure if and only if all of the \im s semi-generated by $x$ have the property of being uncorrelated to any ergodic measure.

\begin{proof} If $x$ is weakly correlated to a point $y$ generic for an ergodic measure $\nu$ then there is a sub\sq\ $(n_k)$ such that the limit $\lim_k\corr(x_{[1,n_k]},y_{[1,n_k]})$ exists and is different from zero. Choosing a sub-sub\sq\ we can assume that the pair $(x,y)$ semi-generates an \im\ $\xi$ on the product space. Clearly, the second marginal of $\xi$ is $\nu$, while the first marginal is an \im\ $\mu$ for which $x$ is semi-generic and which is correlated with $\nu$ (via the joining $\xi$).

Now suppose that $x$ semi-generates an \im\ $\mu$ which is correlated to an ergodic measure $\nu$ via a joining $\xi$. By Theorem \ref{thm2a}, we can assume that $\nu$ is strictly ergodic. Now, Lemma \ref{semgengen} allows to couple $x$ with a point $y$ generic for $\nu$ so that
$(x,y)$ semi-generates $\xi$. It is clear that $x$ and $y$ are weakly correlated.
\end{proof}
 
\section{Examples}
 
\begin{exam}
This is an example of an \im\ on $K^\N$, disjoint from (hence uncorrelated to) any ergodic measure.
Consider the mapping $T(t,s)=(t,s+t)$ on the two-dimensional torus, equipped with the product Lebesgue measure $dt\times ds$. Ignoring a set of measure zero, the space decomposes to invariant circles, where on each circle we have a different irrational (hence ergodic) rotation. Suppose that an ergodic measure $\nu$ is not disjoint with $dt\times ds$; it follows that the set $A$ of such parameters $x$ that $\nu$ is not disjoint with the irrational rotation by the angle $x$, has positive Lebesgue measure in the base. The fact that $\nu$ is not disjoint with an irrational rotation is equivalent to $\nu$ having an eigenvalue rationally dependent with the rotation angle. Since the ergodic measure $\nu$ possesses at most countably many different eigenvalues, it follows that the set $A$ is at most countable, hence of measure zero, implying disjointness. Now, we take any (measurable) finite partition of the 2-torus which partitions nontrivially every ergodic circle, and label its members by elements of a finite subset $K$ of the unit disc. This produces a symbolic factor $\mu$ of $dt\times ds$ on $K^\N$ disjoint from all ergodic measures (as a factor of such, nontrivial on each ergodic component).
\end{exam}

\begin{exam}\label{example}
There exists a shift-\im\ on $K^\N$ uncorrelated to any ergodic measure, yet not disjoint with an irrational rotation (in fact being an extension of such a rotation).

Consider the direct product of the identity on the circle $S_1=[0,1)$ ($0=1$) with an irrational 
rotation (by $\alpha$) on $S_1$ (represented as $(\cdot +\alpha)$ mod 1). 
Let $A\subset S_1\times S_1$ be the triangle $A=\{(t,s): t\in[0,1),\ 0\le s< t\}$ and let 
$$
\varphi(t,s) = e^{\pi i\mathbf 1_A(t,s)}
$$ 
(i.e., $\varphi$ equals $-1$ on $A$ and $1$ otherwise). Consider the $\Z_2$-extension ($\Z_2$ written multiplicatively, as $\{-1,1\}$) corresponding to the cocycle $\varphi$:
$$
T_\varphi(t,s,\kappa) = (t,s+\alpha, \kappa\cdot\varphi(t,s)), 
$$
where $(t,s,\kappa)\in X=S^2_1\times\Z_2$.
This mapping preserves the measure $\mu$ which is the product of the Lebesgue measure on $S_1^2$ 
and the Haar measure on $\Z_2$: $d\mu=dt\times ds\times d\kappa$, and the corresponding measure-preserving system $(X,T_\varphi,\mu)$ is clearly an extension of the irrational rotation by $\alpha$, which appears on the second coordinate. Thus, this measure-preserving system is \emph{not disjoint} from the ergodic system represented by the rotation. It is easy to see that the above $\Z_2$-extension has a symbolic representation over two symbols $\{-1,1\}$ obtained by replacing each point by its $\Z_2$-forward itinerary
$$
(t,s,\kappa) \mapsto (x_n)_{n\ge 1},
$$
where 
$$
x_n = \kappa \prod_{i=0}^{n-2} \varphi(t,s+i\alpha),
$$
where the product of zero terms (occurring for $n=1$) equals, by convention, 1. In this representation, $\mu$ becomes a shift-invariant measure supported by $K^\N$, where $K = \{-1,1\}$ is a finite subset of the unit disc, 
and hence fits in the framework of Section \ref{sectwo}. Notice that the ``first symbol value'' 
function in this symbolic representation coincides simply with $\kappa$ in the skew product representation.
\smallskip

We will argue that so defined $\mu$ is uncorrelated to any ergodic measure supported by any complex-valued subshift. In what follows, we will switch freely between the symbolic and skew product representation of $\mu$, depending on our needs.

Clearly, $\mu$ is not ergodic, its ergodic components are measures $\mu_t$ supported by the cocycle extensions of the circle rotation with fixed parameter $t$, and the ``section cocycle'' $\varphi_t$ (equal $-1$ on the arc $[0,t)$ and $1$ otherwise). Of course, not all such cocycle extensions are ergodic (meaning that the measure $ds\times d\kappa$ need not be ergodic), but as we will explain in the next section, the set of parameters $t$ for which this happens has measure zero (see the statement \eqref{state}), so in the ergodic decomposition of $\mu$ such parameters may be ignored.

Let $\nu$ be an ergodic measure on a complex-valued subshift. Since $\mu$ has zero mean (the integral of $x_1$ is zero), in order to prove that $\nu$ is uncorrelated to $\mu$ we need to show that $\int y_1 x_1 \, d\xi = 0$ (the complex conjugate can be skipped because $x_1$ is real) for any joining $\xi$ of $\nu$ and $\mu$. Let $\xi = \int \xi_t\,dt$ be the disintegration of $\xi$ with respect to $dt$, so that $\xi_t$ is a joining of $\nu$ with $\mu_t$. 

We have $\int y_1 x_1 \, d\xi = \int (\int y_1 x_1 \, d\xi_t)\,dt$ hence it suffices to show that the inner integral vanishes for almost every $t$. We will do it by proving the following claim:

\begin{itemize}
	\item There exists a set $E\subset S_1^2$ of full product Lebesgue measure, such that whenever a \sq\ $(t_j)_{j\ge 1}$ satisfies, for every $j\neq j'$, the condition $(t_j,t_{j'})\in E$, then the sequence $\int y_1 x_1 \, d\xi_{t_j}$ tends to zero with $j$.
\end{itemize}

At first we argue why is this claim sufficient. Suppose that $\int y_1 x_1 \, d\xi_t\neq 0$ on a positive measure set of parameters $t$. Then, for some $\epsilon>0$ the inequality $|\int y_1 x_1 \, d\xi_t|\ge \epsilon$ also holds on a positive measure set $F$ of parameters $t$. There exists $t_1\in F$ such that the $t_1$-section of $E$ has full measure. Then there exists $t_2\in F$ belonging also to the aforementioned $t_1$-section of $E$ and such that the $t_2$-section of $E$ has full measure. Inductively, once $t_1,\dots,t_j$ are selected, we pick $t_{j+1}\in F$ belonging to all the $t_{j'}$-sections of $E$ for ${j'}\le j$ (and having a full measure section of $E$ itself). Such \sq\ $(t_j)$ satisfies the condition that all distinct pairs are in $E$, while the corresponding \sq\ of integrals does not tend to zero, so the claim does not hold.

In order to prove the claim, assume temporarily that the required set $E$ exists and fix a \sq\ $(t_j)$ as described above (with all distinct pairs in $E$). From now on, we will abbreviate the indexes $t_j$ by $j$ (and write $\mu_j$, $\xi_j$, $\varphi_j$ instead of $\mu_{t_j}$, $\xi_{t_j}$, $\varphi_{t_j}$, respectively). Consider a countable joining $\zeta$ of the measures $\nu,\mu_1,\mu_2,\mu_3,\dots,$ such that for every $j\ge 1$ the projection of $\zeta$ jointly on the zero'th and $j$th coordinate equals $\xi_j$ (there exists such a joining: all joinings $\xi_j$ have the common factor $\nu$, hence we can take $\zeta$ to be their relatively independent joining over the common factor).

On the product space supporting $\zeta$ we have the function $y_1$ (depending only on the zero'th coordinate) and the functions $x_1^{(1)}, x_1^{(2)}, x_1^{(3)},\dots$, where $x_1^{(j)}$ depends only on the $j$th coordinate and represents the first symbol value on the support of $\mu_j$. All these functions are measurable and bounded, so they belong to $L^2(\zeta)$. The integrals $\int y_1 x_1 \, d\xi_j$ can be written as $\int y_1 x_1^{(j)}\, d\zeta$, i.e., they become the inner products $\langle y_1, x_1^{(j)}\rangle$ in $L^2(\zeta)$. If we knew that the functions $x_1^{(j)}$ were pairwise orthogonal (they are obviously normalized), then the above inner products would be the Fourier coefficients of the projection of $y_1$ onto the subspace spanned by the functions $x_1^{(j)}$ and thus they would form a \sq\ belonging to $\ell^2$, in particular they would converge to zero, as needed.

For orthogonality of $x_1^{(j)}$ and $x_1^{(j')}$ (for $j\neq j'$) we need to check that the integral $\int x_1^{(j)} x_1^{(j')}\, d\zeta$ equals zero. This integral equals $\int x_1^{(j)} x_1^{(j')}\, d\zeta_{j,j'}$, where $\zeta_{j,j'}$ is the projection of $\zeta$ onto jointly the $j$th and $j'$th coordinates. Clearly $\zeta_{j,j'}$ is a joining of $\mu_j$ and $\mu_{j'}$, so, in fact, it suffices to show that $\mu_j$ and $\mu_{j'}$ are uncorrelated. Since both measures are ergodic, every their joining decomposes to ergodic joinings, thus it suffices to examine their ergodic joinings (denoted henceforth by $\theta$) only. 

Now we must go back to the original skew product representation and study possible ergodic joinings $\theta$ of $\mu_t$ and $\mu_{t'}$ (where $t, t'$ abbreviate $t_j$ and $t_{j'}$, respectively). Let $(x,x')$ be a pair generic for $\theta$. This pair is obtained in the following manner: we choose two points, $s_0$ and $s_0'$ on the circle, and two initial values, $\kappa_0$ and $\kappa_0'$, from $\Z_2$, and then $x$ and $x'$ are given (as symbolic \sq s) by the rule
\begin{align*}
x_n &= \kappa_0 \prod_{i=0}^{n-2} \varphi_t(s_0+i\alpha),\\
x'_n &= \kappa'_0 \prod_{i=0}^{n-2} \varphi_{t'}(s'_0+i\alpha)= \kappa_0' \prod_{i=0}^{n-2} \varphi_{t'}(s_0+u+i\alpha),
\end{align*}
where $u=s'_0-s_0$.
By genericity of $(x,x')$, the integral of the product of the first-symbol value functions can be evaluated
as the limit of the averages of the products of the $j$th-symbol values, i.e., we just need to look at the \sq\ $x_nx'_n$ (obtained by coordinatewise multiplication of the above two \sq s). Since $\kappa_0\kappa'_0$ is just another element of $\Z_2$, such \sq\ is obtained as the symbolic representation of the point $(s_0,\kappa_0\kappa'_0)$ in the cocycle extension (of the same rotation by $\alpha$), with the new cocycle $\varphi(s)=\varphi_t(s)\varphi_{t'}(s+u)$. This new cocycle equals $-1$ on the symmetric difference of the intervals $[0,t)$ and $[u,u+t')$ (and 1 on the rest). Suppose that this new cocycle extension is ergodic (with respect to the product measure $ds\times d\kappa$). By a classical theorem of Furstenberg (\cite{F}), this extension is also strictly ergodic, and hence every point (in particular the one we have selected, $(s_0,\kappa_0\kappa_0')$) is generic for
$ds\times d\kappa$. Thus, the limit of the averages we are interested in equals the integral of the
``first symbol value'' function in the symbolic representation of the new cocycle extension, i.e., of the function $(s,\kappa)\mapsto\kappa$. Clearly, the integral of this function with respect to $ds\times d\kappa$ equals zero.

In this manner, we have arrived to the following conclusion: All we need is the existence of a set $E\subset S_1^2$ of full product Lebsgue measure, such that every pair of parameters $(t,t')\in E$, $t\neq t'$ fulfills: 
	\begin{itemize}
	\item for any $u\in S_1$ the cocycle extension corresponding to the cocycle equal to $-1$ on the
	symmetric difference of the intervals $[0,t)$ and $[u,u+t')$ (and 1 on the rest) is ergodic.
\end{itemize}

The following section is devoted to studying ergodicity of four-jump $\Z_2$-extensions. As a corollary of
the criteria which we provide, we will derive that the above set $E$ indeed exists (see Theorem \ref{ergCocy1}). This ends the verification of our example.
\end{exam}

\begin{exam}
This example shows that the property of being uncorrelated to any ergodic measure is not a conjugacy invariant.

In the preceding example, we have a shift-\im\ $\mu$ on $\{-1,1\}^\N$. The code 
$$
\Pi(x)_n = x_n x_{n+1}
$$
has the effect that it turns the cocycle $\varphi$ into a semicocycle, that is it reproduces the 
symbolic system arising from reading the function $\varphi$ along the orbits (without choosing randomly the initial value and without the cumulative multiplication). In this factor, the first symbol value function is equal (up to measure) to the function $\varphi$ on $S_1^2$ with the Lebesgue measure. Now consider the code
$$
\Pi'(x)_n = \frac34 x_n + \frac14 \Pi(x)_n.
$$
This is a conjugacy sending our system to a subshift over four symbols $\{-1,-\frac12,\frac12,1\}$ 
so that the first symbol carries information about both the original first symbol (by just looking at the sign) and the first symbol of the factor by $\Pi$ (by looking at the finer value). We will show that the measure $\Pi'(\mu)$, although it is conjugate to $\mu$ uncorrelated to any ergodic measure, is correlated to an ergodic measure, namely to the rotation by $\alpha$ represented symbolically as a Sturmian system 
given by the semicocycle $\varphi_t$ (the choice of $t$ is in fact arbitrary, but to get the classical Sturmian representation we choose $t=\alpha$). Indeed, for any joining $\xi$ of $\Pi(\mu_\alpha)$ with $\Pi'(\mu)$ we have
\begin{gather*}
\int y_1x_1\,d\xi = \frac34 \int \varphi(\alpha,s')x_1\,d\xi + \frac14\int\varphi(\alpha,s')\varphi(t,s)\,d\xi,
\end{gather*}
where $\xi$ is understood as a joining of the Lebesgue measure on the circle $\{\alpha\}\times S_1$ (here the variable is $s'$) with $\mu$, which in the rightmost integral is replaced by the product Lebesgue measure (here the variables are $t$ and $s$). The central integral equals zero, because it attempts to correlate $\mu$ with an ergodic measure. We can choose $\xi$ so that in the last integral it represents a joining concentrated on the diagonal set $\{s'=s\}$ and here it is the Lebesgue measure $dt\times ds$ (such measure is easily seen to be a joining of $\Pi(\mu_\alpha)$ with $\Pi(\mu)$, so it can be lifted to a joining of $\mu_\alpha$ with $\mu$). Then the integrated function equals $-1$ for pairs $(t,s)$ such that $t\le s <\alpha$ and $\alpha\le s <t$, and $1$ otherwise. As very easy to see, this integral is positive
($\alpha$, being irrational, differs from $0$ or $1$) and so $\xi$ is a correlating joining.
\end{exam}

\section{Ergodicity of four-jump $\Z_2$-cocycles}\label{section6}
In this section we will consider $\Z_2$-exten\-sions $T_\varphi$ of an irrational rotation (by $\alpha$) 
on the circle $S_1$ equipped with the Lebesque measure (denoted by $ds$), determined by a cocycle $\varphi:S_1\to\Z_2$:
$$
T_\varphi(s,\kappa) = (s + \alpha, \kappa\cdot\varphi(s)).
$$
For $n\ge 1$, the $n$th iterate of $T_\varphi$ is determined by the \emph{$n$-step cocycle} 
$$
\varphi^{(n)}(s) = \prod_{j=0}^{n-1}\varphi(s+j\alpha),
$$
namely
$$
T^n_\varphi(s,\kappa) = (s + n\alpha, \kappa\cdot\varphi^{(n)}(s)).
$$
As before, we will say that the cocycle $\varphi$ is ergodic if the product measure $ds\times d\kappa$
is ergodic under $T_\varphi$. One easily proves the following criterion for this $\Z_2$-extension: $\varphi$ is ergodic if and only if it is not a \emph{coboundary}, i.e., there is no measurable solution $\psi$ of the so-called \emph{cohomological equation}, which, in the multiplicative notation, reads:
\begin{equation}\label{eqfunct}
\varphi(s) = \frac{\psi(s + \alpha)}{\psi(s)}. 
\end{equation}
The function $\psi$, if it exists, can be taken with values in $\Z_2$. 
In such case, for each $n\ge 1$ we have
\begin{equation}\label{eqfunct1}
\varphi^{(n)}(s) = \frac{\psi(s + n\alpha)}{\psi(s)}.
\end{equation}
  
The following lemma gives a sufficient condition for ergodicity of a cocycle.

\begin{lem} \label{noSolLem} If there exist a sequence $(n_k)$ of positive integers such that 
\begin{itemize}
	\item $n_k \alpha \to 0$ in $S_1$, and
	\item $\varphi^{(n_k)} \not\to 1$ in $L^1(ds)$,
\end{itemize}
then the cocycle $\varphi$ is ergodic.
\end{lem}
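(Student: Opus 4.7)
The plan is to argue by contrapositive, using the criterion for ergodicity of the $\Z_2$-extension that the author recalls just before the lemma: the cocycle $\varphi$ fails to be ergodic precisely when it is a coboundary, i.e., when there exists a measurable $\psi\colon S_1\to\Z_2$ satisfying the cohomological equation \eqref{eqfunct}, and in that case the telescoped identity \eqref{eqfunct1} holds for every $n\ge1$.

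So I would assume $\varphi$ is \emph{not} ergodic and derive the negation of one of the two hypotheses of the lemma. Pick a measurable solution $\psi\colon S_1\to\Z_2$. Since $\psi$ takes only the values $\pm1$, division equals multiplication, and the identity \eqref{eqfunct1} becomes
$$
\varphi^{(n)}(s) = \psi(s+n\alpha)\,\psi(s),
$$
so that
$$
\varphi^{(n)}(s)-1 = \bigl(\psi(s+n\alpha)-\psi(s)\bigr)\psi(s),
$$
and in particular $|\varphi^{(n)}(s)-1| = |\psi(s+n\alpha)-\psi(s)|$ pointwise. Integrating,
$$
\bigl\|\varphi^{(n)}-1\bigr\|_{L^1(ds)} = \bigl\|\psi(\cdot+n\alpha)-\psi\bigr\|_{L^1(ds)}.
$$

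Now I would invoke the standard fact that translation is continuous in $L^1$ of the circle (first for continuous functions, which are uniformly continuous on $S_1$, then extended to $L^1$ by density). Applied along the given sequence with $n_k\alpha\to 0$, this yields $\|\psi(\cdot+n_k\alpha)-\psi\|_{L^1}\to 0$, hence $\varphi^{(n_k)}\to 1$ in $L^1(ds)$, contradicting the second hypothesis of the lemma. This forces $\varphi$ to be ergodic.

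There is essentially no obstacle here; the argument is a direct computation once one has the cohomological equation and continuity of translation in $L^1$. The only point that deserves care is the remark that a measurable coboundary transfer function for a $\Z_2$-valued cocycle can be taken $\Z_2$-valued, which is exactly the statement recalled by the authors immediately preceding the lemma and which makes the pointwise identity $|\varphi^{(n)}-1|=|\psi(\cdot+n\alpha)-\psi|$ legitimate.
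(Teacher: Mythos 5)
Your proof is correct and follows essentially the same route as the paper: contrapositive via the coboundary criterion, telescoping to $\varphi^{(n)}(s)=\psi(s+n\alpha)\psi(s)$, and then $L^1$-convergence of $\psi(\cdot+n_k\alpha)$ to $\psi$. The only cosmetic difference is that the paper justifies this last convergence by Luzin's theorem while you invoke continuity of translation in $L^1$; these are interchangeable here.
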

\begin{proof}
If $\varphi$ is not ergodic then \eqref{eqfunct} has a measurable solution $\psi$, and \eqref{eqfunct1} holds. But then $n_k\alpha\to 0$ in $S_1$ implies (via Luzin Theorem) that $\psi(s+n_k\alpha)$
tends to $\psi(s)$ in $L^1(ds)$ which yields that $\varphi^{(n_k)} \to 1$ in $L^1(ds)$.
\end{proof}



\medskip

We will be using the following notation: For $s\in S_1=[0,1)$, we let 
$$
\|s\| = \min (s, 1 - s)
$$ 
(the distance of $s$ to 0 in $S_1$). This quantity satisfies, for $s,s' \in S_1$, the triangle inequality $\|s + s'\| \leq  \|s\| +  \|s'\|$, and $\|q s\| \leq q\|s\|$, for $q \in \N$ (the sums and multiples of elements in $S_1$ are understood modulo 1).
\smallskip

The continued fraction expansion of $\alpha$ will be written as $[0; a_1,..., a_k,...]$ and $(\frac{p_k}{q_k})_{k \ge -1}$ will be the sequence of its convergents. Recall that, for all $k \ge 1$, we have
\begin{align}
&\alpha = \frac{p_k}{q_k} + \frac{\zeta_k}{q_k}, \text{ \ where \ } |\zeta_k| \leq \frac1{q_{k+1}}
\leq \frac1{a_{k+1}} \frac1{q_k}, \ \ \ \|q_ k\alpha\| \leq \frac1{q_{k+1}}, \label{pnqn} \\
&\frac1{2q_k} \le \frac1{q_k + q_{k-1}} \le \|q_{k-1} \alpha\| \leq \|j \alpha\|, \ \forall j: 0<|j| < q_k.
\label{minmajqn}
\end{align}

According to the inequality (\ref{minmajqn}), for $s \in S_1$, the distance between any two elements of the set $\{s - j\alpha : j = 0,\dots, q_k -1\}$ is larger than $\frac1{2q_k}$.
\medskip

For $t, t', u\in S_1$ we will consider the $\Z_2$-extensions given by the cocycles
\begin{itemize}
	\item $\varphi_t = e^{\pi i\mathbf 1_{[0,t)}}$, 
	\item $\varphi_{t,t'}^u(s) = \varphi_t(s)\cdot\varphi_{t'}(s+u)$.
\end{itemize}
The function $\varphi_{t,t'}^u$ equals $-1$ on the symmetric difference of the (positively oriented) arcs $[0,t)$ and $[u,u+t')$ and $1$ otherwise.

\medskip
Ergodicity of the two-jump cocycles $\varphi_t$ has been discussed in a series of papers 
(W. Veech \cite{Ve69}, \cite{Ve75}, K. Merrill \cite{Me85}, M. Guenais and F. Parreau \cite{GuPa06}). 
In the latter paper a complete characterization of ergodicity has been given in terms of Ostrowski's expansion of the parameter $t$ relatively to $\alpha$. These results in particular imply that 
\begin{equation}\label{state}
\text{the set of parameters $t$ for which $\varphi_t$ is ergodic has full Lebesgue measure.}
\end{equation}

Ergodicity of cocycles with 4 jump points has been proved for some families (cf \cite{Me85}, \cite{GuPa06}, \cite{CoPi14}). Nevertheless, for cocycles of the form $\varphi_{t,t'}^u$, it seems that no complete characterization of the ergodic case has been given.
If $t=t'$, as shown in \cite{Me85} (even when $\alpha$ is of bounded type), there exists an uncountable family of parameters $(t,u)$ such that the cocycle $\varphi_{t,t}^u$ is not ergodic. 

\vskip 3mm
We will prove:
\begin{thm} \label{ergCocy0} If the cocycle $\varphi_{t,t'}^u$ is not ergodic
then the parameters $t, t', u$ satisfy simultaneously the following four convergences
\begin{align*}\label{fourcon}
&\lim_k \min\{\|q_k t\|, \|q_k u\|, \|q_k (t'+u)\|\} = 0, \\
&\lim_k \min\{\|q_k t\|, \|q_k (t-u)\|, \|q_k (t - t' - u)\|\} = 0, \\
&\lim_k \min\{\|q_k u\|, \|q_k (t-u)\|, \|q_k t'\|\} = 0, \\ 
&\lim_k \min\{\|q_k (t'+u)\|, \|q_k(t - t' - u)\|, \|q_k t'\|\} = 0,
\end{align*}
where $(q_k)$ is the \sq\ of denominators in the continued fraction expansion of~$\alpha$.
In particular, we have
\begin{equation}\label{minn0}
\lim_k \min\{\|q_k (t-t')\|, \|q_k (t+t')\|\} = 0. 
\end{equation}
\end{thm}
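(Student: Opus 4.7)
The plan is to apply Lemma~\ref{noSolLem} with $n_k = q_k$. By \eqref{pnqn} we have $\|q_k\alpha\| \le 1/q_{k+1} \to 0$, so the first hypothesis of the lemma is satisfied. Hence non-ergodicity of $\varphi = \varphi_{t,t'}^u$ forces $\varphi^{(q_k)} \to 1$ in $L^1(ds)$; since $\varphi^{(q_k)}\in\{-1,+1\}$, this is equivalent to
\[
m\{s \in S_1 : \varphi^{(q_k)}(s) = -1\} \to 0.
\]
Using the factorization $\varphi(s) = \varphi_t(s)\varphi_{t'}(s+u)$ one obtains $\varphi^{(q_k)}(s) = \varphi_t^{(q_k)}(s)\varphi_{t'}^{(q_k)}(s+u)$, so the above rewrites as $m(A_k^t \triangle (A_k^{t'} - u)) \to 0$, where $A_k^\tau := \{\varphi_\tau^{(q_k)} = -1\}$.

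Next I would obtain a precise description of the sets $A_k^\tau$ for $\tau\in\{t,t'\}$. Writing $\varphi_\tau^{(q_k)}(s) = (-1)^{N_\tau(s)}$ with $N_\tau(s) = \sum_{j<q_k}\mathbf 1_{[0,\tau)}(s+j\alpha)$, the Denjoy--Koksma inequality combined with the separation $\|j\alpha\| \ge 1/(2q_k)$ from \eqref{minmajqn} bounds the number of consecutive integer values taken by $N_\tau$. Consequently, $A_k^\tau$ is a union of short arcs whose endpoints lie in the $2q_k$-element set $\{-j\alpha\}_{j<q_k} \cup \{\tau - j\alpha\}_{j<q_k}$, with total measure equal to $\|q_k\tau\|$ or $1 - \|q_k\tau\|$ according to a parity that we fix by passing to a subsequence. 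The heart of the proof is then to deduce the four arithmetic conditions from the smallness of $m(A_k^t \triangle (A_k^{t'} - u))$: each surviving arc of $A_k^t$ must be asymptotically matched with an arc of $A_k^{t'} - u$, forcing an endpoint identity of the form $p - j\alpha \equiv q - j'\alpha \pmod 1$, where $p, q$ are two of the four jump points $0,t,u,u+t'$ of $\varphi$. The separation \eqref{minmajqn} restricts $|j - j'|$; multiplying by $q_k$ and using $q_k\alpha \equiv \eta_k$ with $|\eta_k| \le 1/q_{k+1}$ yields $\|q_k(p - q)\| \to 0$. Running this analysis with each of the four jump points playing in turn the role of the anchor produces, row by row, the four convergences stated in the theorem.

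For the ``in particular'' claim, pigeonhole gives a subsequence along which each of the four minima is attained by one specific term, producing four quantities $\xi_1,\dots,\xi_4$ (each a signed difference of two jump points of $\varphi$) with $\|q_k\xi_i\| \to 0$. A finite case check over the $3^4 = 81$ possible choice patterns -- simplified by the observation that $t$ appears as an option in rows 1 and 2 while $t'$ appears in rows 3 and 4 -- shows that in every case one can write $t - t' = \sum_i \varepsilon_i\xi_i$ or $t + t' = \sum_i\varepsilon_i\xi_i$ with $\varepsilon_i \in \{-1, 0, 1\}$; the triangle inequality for $\|\cdot\|$ then yields $\min\{\|q_k(t-t')\|, \|q_k(t+t')\|\}\to 0$. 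The main obstacle is the arc-matching argument in the middle paragraph, which converts the measure-theoretic smallness of $A_k^t \triangle (A_k^{t'} - u)$ into a combinatorial pairing of jump-point orbits; in particular, the ``boundary'' arcs near the ends of the $q_k$-orbit range need to be handled carefully so that their contribution does not obscure the interior pairings enforced by \eqref{minmajqn}.
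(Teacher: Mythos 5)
There is a genuine quantitative gap at the last step of your arc-matching argument, and it is exactly the difficulty the paper's proof is engineered to avoid. From an approximate endpoint identity $\|(p-q)-(j-j')\alpha\|=o(1/q_k)$ with $0\le j,j'\le q_k-1$ you conclude $\|q_k(p-q)\|\to 0$ by ``multiplying by $q_k$''. But the triangle inequality only gives
\[
\|q_k(p-q)\|\ \le\ q_k\,\|(p-q)-(j-j')\alpha\| \,+\, |j-j'|\cdot\|q_k\alpha\|\ \le\ o(1)+\frac{q_k}{q_{k+1}},
\]
and $q_k/q_{k+1}\ge 1/(a_{k+1}+1)$ does not tend to zero when $\alpha$ has bounded partial quotients (e.g.\ the golden mean). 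Nothing forces $|j-j'|=o(q_{k+1})$: the separation \eqref{minmajqn} only keeps apart discontinuities of the \emph{same} type (points $x_i-j\alpha$ with the same $i$), and places no restriction on which $j,j'$ realize a near-coincidence between two \emph{different} jump-point orbits. So with $n_k=q_k$ your argument yields only $\|q_k(x_i-x_{i'})\|\lesssim q_k/q_{k+1}$, which is vacuous for bounded-type $\alpha$.

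The paper closes this gap by arguing by contradiction and decoupling the number of iterates from $q_k$: if the conclusion fails for some anchor $x_{i_0}$, one extracts a subsequence and a $\gamma>0$ with $\lim_l\|q_{k_l}(x_i-x_{i_0})\|>\gamma$ for all $i\neq i_0$, and then chooses return times $n_k$ with $n_k\alpha\to 0$ \emph{and} $n_k/q_k\to\gamma$. The minimal-gap count (measure of $\{\varphi^{(n_k)}=-1\}$ is at least $n_k$ times the smallest gap adjacent to a type-$i_0$ discontinuity) gives a gap of size $o(1/n_k)=o(1/q_k)$ with $|j-j'|<n_{k_l}$, so the error term becomes $|j-j'|\cdot\|q_{k_l}\alpha\|\le n_{k_l}/q_{k_l+1}\le n_{k_l}/q_{k_l}\to\gamma$, producing the contradiction $\gamma<\gamma$. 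You would need to import this device (or an equivalent one) for your plan to work. The rest of your outline is sound in spirit: the reduction via Lemma~\ref{noSolLem}, the identification of near-coincidences among the $4n$ discontinuity points $x_i-j\alpha$, and the derivation of \eqref{minn0} by a finite case analysis all parallel the paper, which replaces your $3^4$-case check by a three-case argument on whether $\beta_J(u)$ and $\beta_J(t)$ vanish, after passing to a subsequence along which all $24$ relevant limits exist.
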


\begin{rem}\label{nonsep}
The points $0, t, u, t'+u$ are discontinuities of the cocycle $\varphi_{t,t'}^u$ (they may appear in $[0,1)$ in a different order). Of course, it may happen that some of these points coincide (and the cocycle has only two or even no discontinuities). If we denote these points as $x_1,x_2,x_3,x_4$, the four convergences in the assertion of the theorem can be written as one condition:
\begin{equation}\label{condC1}
\text{for every } i \in \{1, 2, 3, 4\} \text{ \ \ we have \ \ } \lim_k \, \min_{i' \neq i} \, \|q_k (x_i - x_{i'})\| = 0. 
\end{equation}
\end{rem}

The proof of the theorem will be provided in a moment. First we derive from it the following, important for us, result:

\begin{thm} \label{ergCocy1} The set $E\subset S_1^2$ of pairs $(t,t')$ such that the cocycles $\varphi_{t,t'}^u$ are ergodic for all $u\in S_1$
 has full product Lebesgue measure $dt\times dt'$.
\end{thm}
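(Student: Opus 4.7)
My plan is to use Theorem \ref{ergCocy0} to reduce the claim to an elementary measure estimate on the torus $S_1^2$. The crucial observation is that the consequence (\ref{minn0}) of non-ergodicity, namely
$$
\lim_k \min\{\|q_k(t-t')\|,\|q_k(t+t')\|\}=0,
$$
does not involve $u$. Hence the contrapositive of Theorem \ref{ergCocy0} tells us that whenever $(t,t')$ fails (\ref{minn0}), the cocycle $\varphi_{t,t'}^u$ is ergodic for \emph{every} $u\in S_1$. Letting $N\subset S_1^2$ denote the set of $(t,t')$ satisfying (\ref{minn0}), we obtain $S_1^2\setminus E\subset N$, and it suffices to prove that $|N|=0$ in product Lebesgue measure.

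To estimate $|N|$, I would fix $\epsilon\in(0,1/2)$ and $k\ge 1$ and set
$$
A_k^\epsilon=\{(t,t')\in S_1^2:\|q_k(t-t')\|<\epsilon\}\cup\{(t,t')\in S_1^2:\|q_k(t+t')\|<\epsilon\}.
$$
For each fixed $t$, the set of $t'\in S_1$ with $\|q_k(t-t')\|<\epsilon$ is a union of $q_k$ arcs of length $2\epsilon/q_k$, so has one-dimensional Lebesgue measure $2\epsilon$; by Fubini the same holds for the second constituent, giving $|A_k^\epsilon|\le 4\epsilon$, uniformly in $k$.

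Rewriting the condition $\lim_k\min\{\cdot\}=0$ as ``for every $m\ge 1$, eventually $(t,t')\in A_k^{1/m}$'', I would then observe that $N=\bigcap_{m\ge 1}\liminf_k A_k^{1/m}$. Fatou's lemma for sets yields
$$
|N|\le\inf_{m\ge 1}|\liminf_k A_k^{1/m}|\le\inf_{m\ge 1}\liminf_k |A_k^{1/m}|\le\inf_{m\ge 1}\tfrac{4}{m}=0,
$$
completing the reduction.

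The only genuinely hard step is Theorem \ref{ergCocy0} itself, which does all the real Diophantine and dynamical work of the section. Once it is available, the present argument is a short Borel--Cantelli style computation exploiting the fact that (\ref{minn0}) is a purely one-dimensional Diophantine condition on the two linear combinations $t-t'$ and $t+t'$, each of whose small-denominator level sets in $S_1$ has the uniform measure bound $2\epsilon$.
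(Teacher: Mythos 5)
Your proof is correct, and it takes a genuinely different route from the paper's for the only nontrivial step, namely showing that the set $N$ of pairs satisfying \eqref{minn0} is null. The paper proves something stronger: using Weyl's criterion, pairwise orthogonality of the characters $e^{2\pi i(nq_kx+mq_ky)}$ and Rajchman's strong law of large numbers, it shows that for almost every $(t,t')$ the sequence $\bigl(q_k(t-t'),q_k(t+t')\bigr)\bmod 1$ is equidistributed, hence dense, in $S_1^2$, which in particular rules out \eqref{minn0}. You instead observe that \eqref{minn0} is an ``eventually in $A_k^{1/m}$ for every $m$'' condition, bound $|A_k^\epsilon|\le 4\epsilon$ uniformly in $k$, and apply Fatou's lemma for sets to $\liminf_k A_k^{1/m}$; this is shorter and entirely elementary. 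One point worth being aware of: your argument leans essentially on \eqref{minn0} being a genuine limit (so that membership in $A_k^{1/m}$ holds for all large $k$, making $\liminf$ and Fatou applicable); had the conclusion of Theorem \ref{ergCocy0} only given $\liminf_k\min\{\cdot\}=0$, your sets $\limsup_k A_k^{1/m}$ would not be controllable without summability of the measures, whereas the paper's density argument would still apply. As stated, the theorem does assert a limit, so your reduction is valid, and both proofs correctly exploit the fact that \eqref{minn0} does not involve $u$.
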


\begin{proof} By Theorem \ref{ergCocy0} it suffices to show that \eqref{minn0} fails for Lebesgue-almost 
all pairs $(t,t')$. For a moment let $(q_k)_{k\ge1}$ be any strictly increasing \sq\ of positive integers.
We recall the \emph{equirepartition property}: Lebesgue-almost every pair $(x,y)$ satisfies, for every 
continuous function $f$ on $S_1^2$, the condition
$$
\lim_N \frac1N \sum_{k=0}^{N-1} f(q_k x, q_k y) = \int f \, du\, dv. 
$$
Indeed, by approximation of $f$ by trigonometric polynomials and linearity, it suffices to prove the above for $f(x,y) =e^{2 \pi i (n x+m y)}$, with $n,m \in \Z$, and clearly this is nontrivial only when either $n$ or $m$ is different from 0. Since $(q_k)$ strictly increases, the bounded functions 
$e^{2 \pi i (n q_k \, x+m q_k \, y)}$ are pairwise orthogonal (in the Hilbert space $L^2(dx\times dy)$). 
By Rajchman's Strong Law  of Large Numbers (cf. \cite{Ch68}) this implies that 
$$
\lim_N \frac1N \sum_{k=0}^{N-1}e^{2 \pi i (n q_k x+m q_k y)} = 0,
$$
for almost every $(x,y)$, as needed. Since there are countably many pairs $(n,m)$, the equirepartition property holds on a full measure set. In particular, for each pair $(x,y)$ in this set, the \sq\ 
$(q_k x, q_k y) \text{ mod } 1$ is dense in $S_1^2$. By change of coordinates, there exists a set $E$ 
of full Lebesgue measure in $S_1^2$, such that for every pair $(t,t') \in E$ the \sq\ $(q_k (t-t'), q_k (t+t')) \text{ mod } 1$ is dense in $S_1^2$, in particular $(t, t')$ does not satisfy \eqref{minn0}.
The assertion of the theorem is obtained by choosing $(q_k)$ to be the sequence of denominators in the continued fraction expansion of $\alpha$.
\end{proof}


We now pass to the main proof.

\begin{proof}[Proof of Theorem \ref{ergCocy0}] According to Remark \ref{nonsep}, we will focus on proving condition \eqref{condC1}. Condition \eqref{minn0} will be derived at the end.

So, suppose that \eqref{condC1} fails. Then there is $i_0\in\{1,2,3,4\}$ and a sub\sq\ $(k_{\,l})$ (of the indices $k$) and $\gamma>0$ such that for any $i\neq i_0$ in $\{1,2,3,4\}$ the limit below exists and satisfies
\begin{equation}\label{totu}
\lim_l\|q_{k_l}(x_i-x_{i_0})\|>\gamma.
\end{equation}

Denote $\varphi=\varphi_{t,t'}^u$.
The discontinuities of the $n$-step cocycle $\varphi^{(n)}$ occur at the $4n$ points $x_i - j\alpha$, where $i \in \{1,2,3,4\}$, $j\in\{0,1,\dots,n-1\}$. Points obtained for the same index $i$ will be called \emph{discontinuities of type~$i$}. In fact, some of the discontinuity points may coincide (in which case the corresponding discontinuities disappear), but this case will turn out to be trivial. 
\smallskip

By minimality of the irrational rotation, the return times of the orbit of 0 to its neighborhood form a
syndetic set. Since $(q_k)$ grows to infinity, it is easy to find a \sq\ $n_k$ such that $n_k\alpha \to 0$ in $S_1$ and which has the same asymptotics as $\gamma q_k$, i.e., $\frac{n_k}{q_k}\to \gamma$. 

We are assuming that $\varphi$ is not ergodic. Lemma \ref{noSolLem} yields that $\varphi^{(n_k)}$ tends to 1 in $L^1(ds)$. The $L^1$-distance of $\varphi^{(n_k)}$ to the constant function $1$ equals twice the joint measure of the intervals on which $\varphi^{(n_k)}=-1$. First suppose that no discontinuity of type $i_0$ 
coincides with another discontinuity. The function $\varphi^{(n_k)}$ assumes the value $-1$ on exactly one side of each discontinuity of type $i_0$, until the nearest discontinuity point on that side. Thus, the measure of the set where $\varphi^{(n_k)}=-1$ is estimated from below by $n_k$ (the number of discontinuities of type $i_0$) times the minimal distance between such a discontinuity and the nearest discontinuity. If a discontinuity of type $i_0$ coincides with another, this minimal distance equals zero and the above estimate holds trivially. In either case, we conclude that this minimal distance must be of the order $o(\frac1{n_k})$, which is the same as $o(\frac1{q_k})$.

By the remark following \eqref{minmajqn}, for large enough $k$, the above minimal distance must not occur between discontinuities of the same type $i_0$, that is the minimal distance occurs between a discontinuity of type $i_0$ and one of a different type $i\neq i_0$ (depending on $k$). Clearly, as we proceed along the previously selected sub\sq\ $(k_{\,l})$, some type $i_1\neq i_0$ appears in this role infinitely many times. Replacing $(k_{\,l})$ by a sub\sq, we can thus assume that we always have the same (fixed) type $i_1\neq i_0$. Summarizing, we have fixed a sub\sq\ $(k_{\,l})$ of the indices $k$ and a pair of types $i_0$ and $i_1\neq i_0$ such that there exist two \sq s of nonnegative integers $(j_l), (j'_l)$ bounded from above by $(n_{k_l}-1)$, satisfying
$$
\lim_l q_{k_l}\|(x_{i_0} - j_l \alpha) - (x_{i_1} - j'_l \alpha)\| = 0.
$$

Now, since 
$$
\|q_{k_l}(j_l - j'_l)\alpha\| \le |j_l - j'_l| \cdot \|q_{k_l}\alpha\| \le \frac{n_{k_l}}{q_{k_l+1}} \le \frac{n_{k_l}}{q_{k_l}}\underset{l}\longrightarrow \gamma,
$$
(we have used \eqref{pnqn} for the central inequality), we can write
\begin{multline*}
\gamma < \lim_l\|q_{k_l} (x_{i_0} - x_{i_1})\|\le \\
\lim_l q_{k_l}\|(x_{i_0} - j_l \alpha) - (x_{i_1} - j'_l \alpha)\| + \limsup_l\|q_{k_l}(j_l- j'_l) \alpha\| \le 0+ \gamma.
\end{multline*}
This contradiction ends the proof of \eqref{condC1}.

\medskip
To show (\ref{minn0}), it suffices to consider sub\sq s $J=(k_j)$ such that the limits
$$
\beta_J(s) = \lim_j \|q_{k_j}x\|
$$
exist for all the 24 points $x$ obtained by adding or subtracting pairs of different points from $\{x_1,x_2,x_3,x_4\}$ (every sub\sq\ of $(k)$ contains a sub-sub\sq\ $J$ of this kind), and prove 
that for any such $J$ either $\beta_J(t-t') = 0$ or $\beta_J(t+t') = 0$. 

The already proved four convergences in the assertion of the theorem imply that
\begin{align*}
&\min\{\beta_J(t), \beta_J(u), \beta_J(t'+u)\} = 0, \\ 
&\min\{\beta_J(t), \beta_J(t-u), \beta_J(t - t' - u)\} = 0, \\
&\min\{\beta_J(u), \beta_J(t-u), \beta_J(t')\}= 0, \\ 
&\min\{\beta_J(t'+u), \beta_J(t - t' - u), \beta_J(t')\} = 0.
\end{align*}
We will consider three cases:
\begin{itemize}
	\item If $\beta_J(u) = 0$ then $\min\{\beta_J(t), \beta_J(t - t')\} = 0$ (from the second relation) and $\min\{\beta_J(t'), \beta_J(t - t')\} = 0$ (from the fourth relation), therefore either $\beta_J(t - t')=0$
or both $\beta_J(t)=0$ and $\beta_J(t')=0$. But the latter possibility also implies $\beta_J(t - t')=0$.

 \item If $\beta_J(u) > 0$ and $\beta_J(t) = 0$ then $\beta_J(t') = 0$ (from the third relation), which implies $\beta_J(t-t')=0$.
 
 \item If $\beta_J(u) > 0$ and $\beta_J(t) > 0$ then $\beta_J(t'+u) = 0$ (from the first relation) and $\min\{\beta_J(t-u), \beta_J(t')\}= 0$ (from the third relation). In this case, $\beta_J(t')= 0$ is impossible (it would give $\beta_J(t'+u) =\beta_J(u)> 0$), so that we must have $\beta_J(t-u)=0$, 
which combined with $\beta_J(t'+u) = 0$ yields $\beta_J(t+t') = 0$.
\end{itemize}
Therefore in all cases we have either $\beta_J(t-t') = 0$ or $\beta_J(t+t') = 0$, as needed.
\end{proof}

\appendix
\section{Conditional disjointness}

As we know, if two invariant measures are disjoint then they are also uncorrelated.
This follows merely from the fact that the condition defining disjointness is
the same as that for uncorrelation, only it must hold for all, not just one selected
pair of measurable functions. Our Example \ref{example} suggests that there is a weaker version of disjointness, which also implies uncorrelation. This condition, defined below, is specifically well applicable to cocycle extensions.

\begin{defn} Let \xmt\ be a measure-preserving system and let $\mathcal B$ be a
sub-sigma-algebra of $\mathcal A$. We will say that \xmt\ (or shortly $\mu$) is \emph{conditionally}
(given $\mathcal B$) \emph{disjoint} from another system \xmtp\ (or shortly, from $\mu'$) if
$$
\int f(x)\, g(x') \, d\xi = 0,
$$
for every joining $\xi$ of $\mu$ and $\mu'$ and every pair of bounded measurable functions $g$
on $X'$ and $f$ on $X$ satisfying $\E(f|\mathcal B) = 0$.
\end{defn}

Clearly, such an $f$ has zero integral, hence the right hand side above can be written as
$\int f(x) \, d \mu \ \int g(x') \, d \mu'$, as in the definition of (unconditional) disjointness.
Disjointness is the same as conditional disjointness given the trivial sigma-algebra.

The following is now obvious.

\begin{fact}\label{oo}
Suppose \xmt\ is a symbolic system over a finite alphabet $K$ contained
in the unit disc and that it is conditionally (given some sub-sigma-algebra $\mathcal B$)
disjoint from any ergodic system. If the ``first symbol value'' function $f$ satisfies
$\E(f|\mathcal B) = 0$ then $\mu$ is uncorrelated to any ergodic measure.
\end{fact}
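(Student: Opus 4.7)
The plan is to observe that conditional disjointness, when applied with $g$ equal to the conjugate of the first symbol value on the second factor, immediately kills the correlation integral, provided we can eliminate the product of means. The hypothesis $\E(f|\mathcal B) = 0$ is exactly what makes both of these things happen.

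First I would note that if $\E(f|\mathcal B) = 0$ then in particular $\int f\,d\mu = \int \E(f|\mathcal B)\,d\mu = 0$, so the ``first symbol value'' has zero mean under $\mu$. Consequently, for any shift-\im\ $\nu$ on some complex-valued subshift, and any joining $\xi$ of $\mu$ with $\nu$, the quantity controlling correlation reduces:
$$
\int x_1 \overline{y_1}\,d\xi - \int x_1\,d\mu\cdot\int\overline{y_1}\,d\nu = \int x_1 \overline{y_1}\,d\xi = \int f(x)\,g(x')\,d\xi,
$$
where $g(x') = \overline{y_1}$ is a bounded measurable function on the support of $\nu$ (bounded because $K$ lies in the unit disc).

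Next I would invoke the hypothesis that $\mu$ is conditionally disjoint (given $\mathcal B$) from any ergodic system. Fix an arbitrary ergodic \im\ $\nu$ and an arbitrary joining $\xi$ of $\mu$ and $\nu$. Since $f$ satisfies $\E(f|\mathcal B) = 0$ and $g$ is bounded measurable, the definition of conditional disjointness gives $\int f(x)\,g(x')\,d\xi = 0$. Combining this with the previous display, we obtain
$$
\left|\int x_1 \overline{y_1}\,d\xi - \int x_1\,d\mu\cdot\int\overline{y_1}\,d\nu\right| = 0
$$
for every joining $\xi$, hence $\corr(\mu,\nu) = 0$ by Definition \ref{defn3}. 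As $\nu$ was an arbitrary ergodic measure, $\mu$ is uncorrelated to any ergodic measure.

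There is essentially no obstacle here; this is just a matter of carefully matching the definitions. The only tiny point to verify is that the definition of conditional disjointness applies with the specific test functions $f = x_1$ and $g = \overline{y_1}$, which is immediate since both are continuous and bounded on their respective symbolic spaces.
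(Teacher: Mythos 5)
Your argument is correct and is exactly the one the paper intends: the paper declares the Fact ``obvious'' after noting that $\E(f|\mathcal B)=0$ forces $\int f\,d\mu=0$, so conditional disjointness applied to $f=x_1$ and $g=\overline{y_1}$ kills the correlation integral for every joining with an ergodic measure. Nothing is missing.
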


The above fact is especially useful in considerations of symbolic representations
of cocycle extensions (of some \yns) with cocycles taking values in finite subgroups
K of the unit circle. In such case, the first symbol value function evaluated
at $(y, \kappa)$ equals $\kappa$ and has zero expectation with respect to $\mathcal B$.
This situation occurs in our Example \ref{example}, where the proof shows that
the measure $\mu$ is conditionally (given the irrational rotation factor)
disjoint from any ergodic measure. Hence the conditions in Fact \ref{oo} are satisfied.

This can be generalized as follows:

Suppose \xmt\ is a symbolic representation of a cocycle extension
of an ergodic discrete spectrum system \yns\, with a cocycle taking values in a
finite subgroup $K$ of the unit circle.  The discrete spectrum system can be represented as an
ergodic rotation of a compact Abelian group (the addition $y+u$, for $y, u \in Y$ is understood 
in this group).

\begin{prop}Suppose the ergodic decomposition of $\mu$ is  $\mu = \int \mu_t \, dt, \ t \in [0, 1],$
(note that this parametrization can be applied whenever the decomposition is nonatomic), 
where each $\mu_t$ also represents a cocycle extension of \yns\ with a cocycle $\phi_t$. If for almost every pair $t, t'$ and every $u \in Y$, the cocycle $\phi^u_{t, t'} (y) = \overline{\phi_t(y)} \, \phi_{t'} (y + u)$ is ergodic, then $\mu$ is conditionally (given $\mathcal B$)
disjoint from any ergodic system, hence, by Fact \ref{oo}, uncorrelated to any ergodic measure.
\end{prop}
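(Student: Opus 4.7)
To establish conditional disjointness, I fix a joining $\xi$ of $\mu$ with an ergodic system $(X',\mathcal A',\mu',T')$, a bounded measurable $f$ on $X$ with $\E(f|\mathcal B)=0$, and a bounded measurable $g$ on $X'$; the goal is $\int f(x)g(x')\,d\xi=0$. First I disintegrate $\xi=\int\xi_t\,dt$ along the (nonatomic) ergodic decomposition of $\mu$; each $\xi_t$ is a joining of $\mu_t$ with $\mu'$ (the second marginal remains $\mu'$ because $\mu'$ is ergodic and hence extremal among $T'$-invariant probabilities). It suffices to show $I(t):=\int fg\,d\xi_t=0$ for almost every $t$. If $|I(t)|\ge\epsilon$ on a positive-measure set $F\subset[0,1]$, then, using the full-measure set $E\subset[0,1]^2$ supplied by the hypothesis, I would inductively pick a sequence $(t_j)\subset F$ with $(t_j,t_{j'})\in E$ for all $j\ne j'$ (as in the proof of Example~\ref{example}) and form the countable joining $\zeta$ of $\mu',\mu_{t_1},\mu_{t_2},\dots$ relatively independent over the common factor $\mu'$. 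Writing $f^{(j)}\in L^2(\zeta)$ for the pullback of $f$ from the $j$th copy, we have $I(t_j)=\langle g,\overline{f^{(j)}}\rangle$, so Bessel's inequality forces $I(t_j)\to 0$ as soon as the $f^{(j)}$ are pairwise orthogonal in $L^2(\zeta)$, yielding a contradiction.

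Pairwise orthogonality, i.e.\ $\int f(x)\overline{f(x')}\,d\theta=0$, must be verified for every joining $\theta$ of $\mu_{t_j}$ and $\mu_{t_{j'}}$, and by the ergodic decomposition of joinings it suffices to handle ergodic $\theta$. Because $(Y,\mathcal B,\nu,S)$ is an ergodic rotation of a compact Abelian group, its ergodic self-joinings are the off-diagonal joinings $(y,y+u)_*\nu$, so the $(Y\times Y)$-marginal of $\theta$ is supported on $\{y'=y+u\}$ for some $u\in Y$. Changing coordinates to $(y,\kappa,\lambda)$ with $\lambda=\overline{\kappa}\kappa'$ conjugates the joint skew dynamics into
\[
(y,\kappa,\lambda)\longmapsto\bigl(y+\alpha,\ \kappa\,\phi_t(y),\ \lambda\,\phi^u_{t,t'}(y)\bigr),
\]
so the pushforward of $\theta$ to $(y,\lambda)$ is an invariant measure of the $K$-extension of $(Y,\nu,S)$ by $\phi^u_{t,t'}$. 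By the ergodicity hypothesis, this pushforward must coincide with $\nu\times m_K$; in particular the $\lambda$-marginal is Haar.

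To extract $\int f\overline f\,d\theta=0$ from this structural information, I would Fourier-expand $f(y,\kappa)=\sum_{\chi\ne 1}c_\chi(y)\chi(\kappa)$, dropping the trivial character because of $\E(f|\mathcal B)=0$, and analyze the Fourier coefficients of the fiber measures $\theta_{y,y+u}$ on $K\times K$. The coefficient at a pair of characters $(\chi_1,\chi_2)$ is an eigenfunction for the rotation with cocycle $\chi_1(\phi_t)\chi_2(\phi^u_{t,t'})$, and can be nonzero only if this cocycle is an $S^1$-coboundary. The main obstacle---the delicate point of the entire proof---is to show that none of the relevant cocycles are coboundaries. The pairs $(1,\chi)$ are handled directly by the hypothesis; the pairs $(\chi,1)$ and, after rearrangement, the ``symmetric'' pairs $(\chi,\chi)$ are eliminated by the ergodicity of the individual components $\mu_t$ and $\mu_{t'}$; the remaining mixed pairs $\chi_1\ne\chi_2$ must be handled by combining the full-$u$ strength of the hypothesis with the identity $\chi_1(\phi_t)\chi_2(\phi^u_{t,t'})=(\chi_1\overline{\chi_2})(\phi_t)\chi_2(\phi_{t'}(\cdot+u))$ and the translation-invariance of the coboundary property in $u$. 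Once this is in hand, $\theta$ must equal the joining of $\mu_t$ and $\mu_{t'}$ relatively independent over $(Y,\nu,S)$ (i.e.\ $\nu\times m_K\times m_K$ in the new coordinates), and then
\[
\int f\overline f\,d\theta=\int\E(f|\mathcal B)(y)\,\overline{\E(f|\mathcal B)(y+u)}\,d\nu(y)=0
\]
is immediate from $\E(f|\mathcal B)=0$, completing the argument.
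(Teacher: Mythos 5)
Your overall architecture is exactly the one the paper intends: the authors skip the proof by pointing to Example \ref{example}, and your steps --- disintegration $\xi=\int\xi_t\,dt$ over the ergodic decomposition, the $E$/$F$ selection of a sequence $(t_j)$ with all distinct pairs in $E$, the countable relatively independent joining $\zeta$ over the common ergodic factor, Bessel's inequality, and the reduction to ergodic joinings $\theta$ of $\mu_{t_j}$ and $\mu_{t_{j'}}$ --- reproduce that argument faithfully. Your handling of the last step is actually better adapted to the general setting than the Example's: since the cocycles $\phi_t$ are only measurable, the Furstenberg strict-ergodicity argument used there is unavailable, and replacing it by the classification of ergodic self-joinings of a group rotation (off-diagonals) together with the fact that an ergodic abelian $K$-extension admits $\nu\times m_K$ as its unique invariant lift of $\nu$ is the right move.

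The gap is the one you flag yourself, and it is real. The hypothesis gives ergodicity of the single $K$-extension by $\phi^u_{t,t'}=\overline{\phi_t}\,\phi_{t'}(\cdot+u)$, which controls exactly the \emph{diagonal} characters of $K\times K$: it tells you that $\chi\bigl(\overline{\phi_t(y)}\phi_{t'}(y+u)\bigr)$ is not an $S^1$-coboundary for $\chi\neq 1$, and hence kills the terms $\chi_1=\chi_2$ in your Fourier expansion (equivalently, it identifies the law of $(y,\lambda)$ but not of $(y,\kappa,\kappa')$). The mixed terms $\chi_1\neq\chi_2$ require that $\chi_1(\phi_t(\cdot))\,\overline{\chi_2(\phi_{t'}(\cdot+u))}$ not be a coboundary, i.e.\ ergodicity of the full $K\times K$-extension by $(\phi_t(y),\phi_{t'}(y+u))$, which the hypothesis does not provide; your proposed repair via ``translation-invariance of the coboundary property in $u$'' is not an argument --- for $K=\Z_4$, nothing stated rules out that $\phi_t\cdot\phi_{t'}^2(\cdot+u)$ is a coboundary even though all powers of $\overline{\phi_t}\phi_{t'}(\cdot+u)$ are not. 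The gap disappears when $K=\Z_2$ (only one nontrivial character, so every $f$ with $\E(f|\mathcal B)=0$ is of the form $c(y)\kappa$), and more generally whenever $f$ is a multiple of a single character --- which is all that Fact \ref{oo} and Example \ref{example} ever use. So your proof is complete for the paper's actual application, but not for the Proposition as literally stated with $|K|>2$; to close it one must either strengthen the hypothesis to ergodicity of the $K\times K$-extensions or restrict the class of test functions $f$. (This imprecision is arguably inherited from the paper's ``up to easy generalizations,'' but your write-up should not present the missing step as an aside when it is, as you say, the delicate point.)
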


We skip the proof which is, up to easy generalizations, implicitly included in Example \ref{example}.


\begin{thebibliography}{99}

\bibitem[AKLR]{Lem}
H. El Abdalaoui, J. Ku\l aga-Przymus M. Lema\'nczyk and T. de la Rue,
\emph{The Chowla and the Sarnak conjectures from ergodic theory point of view}, preprint, arXiv:1410.1673

\bibitem[Cho65]{Cho} (MR0177943)
S. Chowla, The Riemann Hypothesis and Hilbert's tenth problem. Gordon and Breach, New York, 1965.

\bibitem[Chu68]{Ch68} (MR0229268)
K. L. Chung, A course in probability theory. Harcourt, Brace and World, Inc., New York 1968.

\bibitem[CoPi14]{CoPi14} (MR3265295)
J.-P. Conze, A. Piekniewska,  \emph{On multiple ergodicity of affine
cocycles over irrational rotations}, Israel J. Math. {\bf201} (2014), no.2, 543-584.

\bibitem[Fu61]{F} (MR0133429)
\newblock H. Furstenberg, 
\newblock \emph{Strict ergodicity and transformations of the torus,} 
\newblock Amer. J. Math. \textbf{83}, (1961), 573--601.

\bibitem[GuPa06]{GuPa06}
M. Guenais, F. Parreau, \emph{Valeurs propres de transformations
li\'ees aux rotations irrationnelles et aux fonctions en escalier},
arXiv 0605250v1.

\bibitem[Me85]{Me85} (MR0829362)
K. Merrill, \emph{Cohomology of step functions under irrational rotations}, Israel J. Math. {\bf52} (1985), no. 4, 320-340.

\bibitem[Sar]{Sar} 
P. Sarnak, \emph{Three lectures on the M\"obius function, randomness and dynamics}, http://publications.ias.edu/sarnak/ 

\bibitem[Ve69]{Ve69} (MR0240056)
W. Veech, \emph{Strict ergodicity in zero dimensional dynamical systems and the Kronecker-Weyl theorem mod 2}, Trans. Amer. Math. Soc. {\bf140} (1969) 1-33.

\bibitem[Ve75]{Ve75} (MR0396913)
W. Veech, \emph{Finite group extensions of irrational rotations}, Israel J. Math. {\bf21}, nos. 2-3 (1975), 240-259.

\bibitem[We00]{weiss} (MR1727510)
\newblock B. Weiss, 
\newblock Single Orbit Dynamics,
\newblock AMS, RCSM, \textbf{95}, (2000).


\end{thebibliography}
\end{document}